\newtheorem{thm}{Theorem}[section]
\newtheorem{cor}[thm]{Corollary}
\newtheorem{lem}[thm]{Lemma}
\newtheorem{prop}[thm]{Proposition}
\newtheorem{rem}[thm]{Remark}
\newtheorem{construction}[thm]{Construction}
\numberwithin{equation}{section}
\newcommand{\mycomment}[1]{}
\newcommand{\tC}{\Tilde{C}}
\newcommand{\s}{\sigma}
\DeclareMathOperator{\Aut}{Aut}
\DeclareMathOperator{\Fix}{Fix}
\DeclareMathOperator{\Nm}{Nm}
\DeclareMathOperator{\Pic}{Pic}
\DeclareMathOperator{\End}{End}
\DeclareMathOperator{\Ker}{Ker}
\DeclareMathOperator{\Image}{Im}
\DeclareMathOperator{\Spec}{Spec}
\newcommand{\unnumberedfootnote}[1]{%
  \begingroup
  \renewcommand{\thefootnote}{}%
  \footnotetext[0]{#1}%
  \endgroup
}
\title{On the Prym map of degree 4 cyclic covers of hyperelliptic curves}
\author{Anatoli Shatsila}
\begin{document}
\maketitle
\begin{abstract}

In this paper, we study the Prym map associated to degree 4 \'etale cyclic covers of genus $g$ hyperelliptic curves restricted to the irreducible component $\mathcal{RH}_g[4]^{hyp}$ of the moduli space of such covers where an intermediate cover is hyperelliptic. We show that for $g \geq 3$ the Prym map is injective on $\mathcal{RH}_g[4]^{hyp}$. In the case $g=2$ (where $\mathcal{RH}_2[4]^{hyp} = \mathcal{RH}_2[4]$) we prove that non-empty fibers of the Prym map, apart from two exceptional fibers, are isomorphic to the projective line without 8 points. Moreover, we obtain a new description of the space $\mathcal{RH}_g[4]^{hyp}$ in terms of tuples of complex numbers and find equations of hyperelliptic curves arising from such covers. 

\end{abstract}

\unnumberedfootnote{\textit{2020 Mathematics Subject Classification:} 14H30, 14H40, 14H45, 14H55, 14K12}
\unnumberedfootnote{\textit{Key words and phrases:} Prym variety, Prym map, coverings of curves.}

\section{Introduction}

Given a finite cover of smooth curves $f: C \to H$, the associated Prym variety is defined as the principal component of the kernel of the norm map $$P(f) := \Ker^0[\Nm_f: \Pic^0(C) \to \Pic^0(H)].$$ 

The classical case when $f$ is a double \'etale cover of a genus 2 curve $H$ has been considered in \cite{MR379510}. In recent years, cyclic étale covers of hyperelliptic curves have attracted considerable attention from researchers \cite{MR2023955, MR3519095,  MR4093071, MR4747961, naranjo2024simplicityjacobiansautomorphisms, borówka2025prymmapscycliccoverings}. 

Let $f: C \to H$ be a degree $d \geq 2$ \'etale cyclic cover of a hyperelliptic genus $g$ curve $H$. Then the Prym variety $P(f)$ is a $(d-1)(g-1)$-dimensional abelian variety with polarization of type $(1,1,\ldots, 1, d, \ldots, d)$, where $d$ appears $g-1$ times. In the modular setting, we get a Prym map $$\mathcal{P}_g[d]: \mathcal{RH}_g[d] \to \mathcal{A}_{(d-1)(g-1)}^{(1,\ldots,1,d,\ldots,d)}$$ which associates the Prym variety $P(f)$ to a cover $[f: C \to H] \in \mathcal{RH}_g[d]$, where $$\mathcal{RH}_g[d] := \{(H, \langle\eta\rangle) \: | \: H \text{ is hyperelliptic of genus } g; \eta\in JH[d], \langle \eta \rangle \simeq \mathbb{Z}_d\}/\simeq.$$ 

It is known that the map $\mathcal{P}_g[d]$ is injective for any $g \geq 2$ and $d \geq 6$ if $d$ is not a prime power and generically injective if $d$ is a power of a prime but not a prime \cite{borówka2025prymmapscycliccoverings} or $d$ is a prime such that $(d-1)(g-1) \geq 7$ and $g \not\equiv 3 \pmod d$ \cite{naranjo2024simplicityjacobiansautomorphisms}. Moreover, $\mathcal{P}_g[d]$ is generically finite for $g \geq 3$, $d \geq 5$ and $g=2, d\geq 6$ \cite{naranjo2024simplicityjacobiansautomorphisms, MR4093071}. 

The case $g=2$ has been studied the most extensively. The map $\mathcal{P}_2[d]$ is dominant of degree 10 for $d=7$ \cite{MR3519095} and has positive-dimensional fibers for $d=3,4,5$ \cite{MR3522345, borowka2025prymsmathbbz3timesmathbbz3coveringsgenus}. Prym varieties of degree 3 cyclic covers of genus 2 curves have been studied in many different contexts \cite{BLiso, BLEE, ries}; however, the author is not aware of a result in the literature that provides a complete characterization of the fibers of $\mathcal{P}_2[3]$. An explicit description of the fibers of $\mathcal{P}_2[5]$ appears to be unknown as well.  

In this paper, we focus on the map $$\mathcal{P}_g[4]: \mathcal{RH}_g[4]^{hyp} \to \mathcal{A}_{3g-3}^{(1,\ldots,1,4,\ldots, 4)},$$ where $$\mathcal{RH}_g[4]^{hyp} := \{(H, \langle\eta\rangle) \: | \: H \text{ is hyperelliptic of genus } g; \eta \in JH[4]\setminus JH[2] \text{ with } \eta^2 = \mathcal{O}_H(w_1 - w_2)\}/\simeq$$ is an irreducible component of $\mathcal{RH}_g[4]$ such that the curve $\Spec(\mathcal{O}_H \oplus \eta^2)$ is hyperelliptic for any $[H, \langle \eta \rangle] \in \mathcal{RH}_g[4]^{hyp}$ (see Lemma \ref{hyperellipticmiddlecurve}). Note that $\mathcal{RH}_2[4]^{hyp} = \mathcal{RH}_2[4]$ by Corollary \ref{genus3hyperelliptic}.

The main results of the manuscript can be summarised in the following theorem.

\begin{thm}[Theorem \ref{discr}, Theorem \ref{injective}]

i) The Prym map $\mathcal{P}_g[4]: \mathcal{RH}_g[4]^{hyp} \to\mathcal{A}_{3g-3}^{(1,\ldots,1,4,\ldots, 4)} $ is injective for $g \geq 3$.

ii) Each non-empty fiber of $\mathcal{P}_2[4]$, except for two exceptional fibers, is isomorphic to the projective line without 8 points.

\end{thm}

There are two main components of the proof that are developed in Sections \ref{sec2} and \ref{sec3}, respectively. First, we show that elements of $\mathcal{RH}_g[4]^{hyp}$ are in bijective correspondence with elements of the set $$\Delta_g := \{\{t_1,\ldots, t_{2g-1}\} \subset \mathbb{C}\setminus \{0,1\} \ | \ t_i^2 \neq t_j^2 \text{ for any } i\neq j\}/\sim,$$ where \(\{t_1, \ldots, t_{2g-1}\} \sim \{s_1, \ldots, s_{2g-1}\}\) if and only if $\{t_1,\ldots, t_{2g-1}\} = \{s_1, \ldots, s_{2g-1}\}$ or $\{t_1, \ldots, t_{2g-1}\} = \{\frac{1}{s_1}, \ldots, \frac{1}{s_{2g-1}}\}$. This is done by a careful analysis of order two line bundles defining the cover in terms of Weierstrass points of the hyperelliptic genus $g$ curve $H$. We use the equations of hyperelliptic curves $C_j, C_{j\sigma^2}, C_{j\s, \s^2}$ arising as quotients of $C$ by the involutions to describe the data of line bundles as elements of $\Delta_g$. In Section \ref{sec3}, we study the isogeny $$\mu: JC_j \times JC_{j\sigma^2} \times JC_{j\s,\s^2} \to P(f).$$ In particular, we show that it can be intrinsically recovered from the Prym variety $P(f)$ and its induced polarization. In Section \ref{sec4}, we provide a description of $\mathcal{P}_2[4]$ as the map $$\mathcal{P}_2[4]: \Delta_2 \to \mathcal{A}_3^{(1,1,4)}$$ and show that its fibers are isomorphic to subsets of $\Delta_2$ of the form $$\Delta_{\lambda_1,\lambda_2} := \left\{\{t_1, t_2, t_3\} \in \Delta_2 \: | \: \lambda_1 = \frac{(t_2 - 1)(t_3 - t_1)}{(t_2 - t_1)(t_3 - 1)}, \: \lambda_2 = \frac{(t_2^2 - 1)(t_3^2 - t_1^2)}{(t_2^2 - t_1^2)(t_3^2 - 1)}\right\}$$ for some $\lambda_1 \neq \lambda_2$. Finally, we prove that any such set, apart from two exceptions $\Delta_{-1, \frac{1}{2}}, \Delta_{e^{\pi i /3}, e^{-\pi i /3}}$, is isomorphic to the projective line without 8 points. In Section \ref{sec5}, we use the map $\mu$ and equations of $C_j, C_{j\s^2}, C_{j\s,\s^2}$ to show that the Prym map $\mathcal{P}_g[4]$ is injective on $\mathcal{RH}_g[4]^{hyp}$.

Our methods do not directly generalise to other irreducible components of $\mathcal{RH}_g[4]$. For a cover $[f:C \to H] \in  \mathcal{RH}_g[4] \setminus \mathcal{RH}_g[4]^{hyp}$, the curve $C_{j,\s^2}$ is not isomorphic to $\mathbb{P}^1$, so the curves $C_{j}$ and $C_{j\s^2}$ are not hyperelliptic and there is no decomposition of $P(f)$ as the product of three Jacobians as in Lemma \ref{isotypical}.

We note that while similar approaches --- such as describing moduli spaces of covers using tuples of points on the projective line \cite{MR4068284} or decomposing Prym varieties into elliptic curves and employing their equations \cite{MR4794691} --- have been successfully applied to other Prym maps, this is the first paper to utilize these methods for describing positive-dimensional fibers. 

\subsection*{Acknowledgements}
The author has been supported by the Polish National Science Center project number 2024/53/N/ST1/01634. The author thanks Paweł Borówka for valuable suggestions that improved the article. 

\section{Degree 4 cyclic covers of hyperelliptic curves}
\label{sec2}

The aim of this section is to study \'etale degree 4 cyclic covers of a hyperelliptic curve $H$ of genus $g$ from the perspective of Weierstrass points of $H$. We start by recalling the main facts from the theory of double covers of hyperelliptic curves that can be found in \cite[Chapter 5.2.2.]{MR2964027}.

\begin{lem}

    Let $H$ be a hyperelliptic curve of genus $g$ and $W = \{w_1,\ldots,w_{2g+2}\}$ be the set of its Weierstrass points. 

    \begin{itemize}
        \item[i)] For any point $\eta \in JH[2]$ there exist two disjoint subsets $A_+, A_- \subset \{1,\ldots,2g+2\}$ of equal cardinality such that $$\eta = \mathcal{O}_H\left(\sum_{i \in A_+}w_i - \sum_{j \in A_-}w_j\right).$$
        \item[ii)] For any $A \subset \{1,\ldots,2g+2\}$ with $|A| = g+1$ the following equality holds in $\Pic(H)^{g+1}:$ $$\mathcal{O}_H\left(\sum_{i \in A} w_i\right) = \mathcal{O}_H\left(\sum_{j \in \{1,2,\ldots,2g+2\}\setminus A}w_j\right).$$ 
    \end{itemize}
    In particular, for $g = 2$, any two-torsion point of $JH$ is given by $\mathcal{O}_H(w_i - w_j)$ for some $i,j$. 
\end{lem}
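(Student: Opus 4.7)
The plan is to prove each part separately, then extract the specialization to $g = 2$.

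For part (ii), I would exhibit an explicit rational function whose divisor realises the desired linear equivalence. Working with the affine model $y^2 = \prod_{i=1}^{2g+2}(x - p_i)$, where $p_i = \pi(w_i)$ and $\pi: H \to \PP^1$ is the hyperelliptic map, I consider
$$f_A := \frac{y}{\prod_{j \in A^c}(x - p_j)}.$$
From the standard divisor computations $\mathrm{div}(y) = \sum_i w_i - (g+1)(\infty_+ + \infty_-)$ and $\mathrm{div}(x - p_j) = 2 w_j - (\infty_+ + \infty_-)$, the $(g+1)$-fold poles at infinity cancel precisely, yielding $\mathrm{div}(f_A) = \sum_{i \in A} w_i - \sum_{j \in A^c} w_j$ and establishing the linear equivalence.

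For part (i), I would start by noting that every difference $w_i - w_j$ lies in $JH[2]$, since $2 w_i \sim 2 w_j \sim g^1_2$ (both are fibres of $\pi$). Next, I would invoke the classical identification of $JH[2]$ for a hyperelliptic Jacobian with even-cardinality subsets of $\{w_1, \ldots, w_{2g+2}\}$ modulo complementation, via $S \mapsto \cO_H(\sum_{i \in S} w_i - \tfrac{|S|}{2} g^1_2)$; the match of cardinalities $2^{2g}$ on both sides uses part (ii) for well-definedness. Given such a representation of $\eta$, pick any $T \subset S$ with $|T| = |S|/2$ and use $g^1_2 \sim 2 w_i$ to rewrite $\tfrac{|S|}{2} g^1_2 \sim 2 \sum_{i \in T} w_i$, so that $\eta = \cO_H(\sum_{S \setminus T} w_i - \sum_T w_j)$. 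Setting $A_+ := S \setminus T$ and $A_- := T$ gives the required disjoint subsets of equal cardinality.

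The specialization to $g = 2$ admits a self-contained counting verification. Since $|JH[2]| = 16$, it suffices to show the $16$ candidates $\cO_H(w_i - w_j)$ --- the trivial element together with one for each of the $\binom{6}{2} = 15$ unordered pairs --- are pairwise distinct. If $\cO_H(w_a - w_b) = \cO_H(w_c - w_d)$ with $a \neq b$ and $c \neq d$, then $w_a + w_d \sim w_b + w_c$ in $\Pic^2(H)$. On a hyperelliptic curve two distinct effective degree-$2$ divisors are linearly equivalent only if both lie in the pencil $|g^1_2|$; but $w_i + w_j \in |g^1_2|$ would force $i = j$ among Weierstrass points, so $w_a + w_d = w_b + w_c$ as divisors, whence $\{a, b\} = \{c, d\}$.

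The main technical obstacle is the classical description of $JH[2]$ invoked in part (i); in general this requires some care to show that the natural map from even subsets modulo complementation to $JH[2]$ is both well-defined (using (ii)) and injective. For the $g = 2$ case the rest of the paper relies on, the elementary counting argument above bypasses this step and yields the conclusion directly.
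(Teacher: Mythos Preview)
Your argument is correct. The explicit rational function in part (ii) is the standard one and the divisor computation is right; the reduction in part (i) via the even-subset description of $JH[2]$ is the classical route, and your rewriting $\tfrac{|S|}{2}\,g^1_2 \sim 2\sum_{i\in T} w_i$ is clean. The $g=2$ counting argument is also fine: the key step, that $w_a+w_d \sim w_b+w_c$ with both divisors effective of degree $2$ forces equality of divisors unless both lie in $|g^1_2|$, is exactly the uniqueness of the hyperelliptic pencil.

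As for comparison with the paper: there is nothing to compare. The paper does not prove this lemma; it is stated as a recalled classical fact with a reference to \cite[Chapter~5.2.2]{MR2964027}. Your write-up therefore supplies what the paper omits. The one point you flag yourself---injectivity of the map from even subsets modulo complementation to $JH[2]$---is indeed the only part requiring real input beyond divisor bookkeeping, but for the application in the paper only the $g=2$ case matters, and your direct counting argument handles that without appealing to the general structure theorem.
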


The following lemma provides a criterion for the covering curve to be hyperelliptic. 

\begin{lem}[\cite{BO17}] 
\label{hyperellipticmiddlecurve}
    Let \(H\) be a hyperelliptic curve of genus \(g\) and \(h: C \rightarrow H\) an étale double covering defined by \(\eta \in J H[2]\). Then \(C\) is hyperelliptic if and only if \(\eta=\mathcal{O}_H\left(w_1-w_2\right)\), where \(w_1, w_2 \in H\) are Weierstrass points.
\end{lem}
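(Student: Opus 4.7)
My plan is to prove each direction separately, using the interplay between the hyperelliptic involutions and the deck transformation $\sigma$ of $h: C \to H$.

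For the ``if'' direction, I would argue by explicit equations. Writing $H$ affinely as $y^2 = f(x)$ where the roots $a_1, \ldots, a_{2g+2}$ of $f$ are the $x$-coordinates of the Weierstrass points, the rational function $(x - a_1)/(x - a_2)$ on $H$ has divisor $2w_1 - 2w_2$, so the étale double cover defined by $\eta = \mathcal{O}_H(w_1 - w_2)$ is the normalization of the system $\{y^2 = f(x),\ z^2 = (x - a_1)/(x - a_2)\}$. Solving the second equation for $x$ as a rational function of $z$ and substituting into the first exhibits $C$ as a double cover of $\mathbb{P}^1_z$, so $C$ is hyperelliptic.

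For the converse, I would first descend the hyperelliptic involution $\iota_C$ of $C$ to $H$. Since $\iota_C$ is central in $\Aut(C)$ (as $g_C = 2g - 1 \geq 2$), it commutes with $\sigma$ and so induces an involution $\iota_H$ on $C/\sigma = H$. This $\iota_H$ is nontrivial: otherwise $\iota_C \in \{\id, \sigma\}$, contradicting $|\Fix(\iota_C)| = 2g_C + 2 = 4g$ since $\sigma$ is fixed-point-free. Above each $\iota_H$-fixed point the two $h$-preimages form an $\iota_C$-stable pair, so they are either both fixed or both swapped by $\iota_C$; this forces $|\Fix(\iota_H)| \geq 2g$, and Riemann--Hurwitz then gives $H / \iota_H = \mathbb{P}^1$. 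Uniqueness of the $g^1_2$ on a hyperelliptic curve of genus $g \geq 2$ identifies $\iota_H$ with the hyperelliptic involution of $H$, and exactly two Weierstrass points $w_1, w_2$ have their $h$-preimages swapped by $\iota_C$.

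It then remains to extract $\eta$ from $(w_1, w_2)$. Writing $h^{-1}(w_i) = \{p_i, \iota_C(p_i)\}$ for $i = 1, 2$, the divisors $h^* w_i = p_i + \iota_C(p_i)$ both lie in the hyperelliptic $g^1_{g_C}$ on $C$, so $h^* w_1 \sim h^* w_2$ and hence $h^* \mathcal{O}_H(w_1 - w_2) = \mathcal{O}_C$. Therefore $\mathcal{O}_H(w_1 - w_2) \in \Ker(h^*) = \{\mathcal{O}_H, \eta\}$, and since $w_1 \not\sim w_2$ (as $H$ is not rational) we conclude $\eta = \mathcal{O}_H(w_1 - w_2)$. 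The step I expect to be most delicate is the identification of $\iota_H$ with the hyperelliptic involution of $H$, since $H$ may in principle admit other involutions (e.g.\ bielliptic ones); the fixed-point inequality combined with Riemann--Hurwitz is what rules this out.
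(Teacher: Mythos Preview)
The paper does not supply its own proof of this lemma; it is quoted from \cite{BO17} and used as a black box. So there is nothing in the paper to compare your argument against.

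Your proof is correct and self-contained. The ``if'' direction via the explicit model $\{y^2=f(x),\ z^2=(x-a_1)/(x-a_2)\}$ is the standard way to see this, and your ``only if'' direction is clean: descending $\iota_C$ to an involution on $H$, using the fixed-point count $|\Fix(\iota_C)|=4g$ together with the $0$-or-$2$ dichotomy over each $\iota_H$-fixed point to force $|\Fix(\iota_H)|\geq 2g$, and then Riemann--Hurwitz plus uniqueness of the $g^1_2$ to identify $\iota_H$ with the hyperelliptic involution, is exactly the right mechanism. The final extraction of $\eta$ from $\Ker(h^*)=\{\mathcal{O}_H,\eta\}$ is also correct.

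One small slip: you write ``the hyperelliptic $g^1_{g_C}$ on $C$'', but you mean the hyperelliptic $g^1_2$; the divisors $p_i+\iota_C(p_i)$ are members of the degree-$2$ pencil, which is why they are linearly equivalent. Also, your parenthetical ``as $g_C=2g-1\geq 2$'' is really the hypothesis $g\geq 2$, which is implicit in calling $H$ hyperelliptic; you might say so explicitly, since the centrality of $\iota_C$ (and the uniqueness of the $g^1_2$ on $H$) both rely on it.
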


In particular, we have the following result, already known to Mumford \cite{MR379510}.

\begin{cor}
\label{genus3hyperelliptic}
    Let $g: C \to H$ be an \'etale double cover of a genus 2 curve $H$. Then $C$ is hyperelliptic.
\end{cor}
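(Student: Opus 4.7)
The proof should be an immediate combination of the two preceding lemmas. An étale double cover $g: C \to H$ corresponds to a nontrivial 2-torsion line bundle $\eta \in JH[2]$. By part (i) of the first lemma applied with $g = 2$, together with the remark at the end of that lemma, any such $\eta$ can be written as $\mathcal{O}_H(w_i - w_j)$ for some pair of Weierstrass points $w_i, w_j$ of $H$. Then the hyperellipticity criterion from Borówka–Ortega (the second lemma) applies directly and gives that $C$ is hyperelliptic.

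So the plan is just to assemble this in two sentences: invoke the first lemma to put $\eta$ in the form $\mathcal{O}_H(w_i - w_j)$, and then invoke the second lemma. There is no real obstacle; one only has to note that $\eta$ is necessarily nonzero (since $g$ is a connected double cover), so that the representation of $\eta$ as a difference of Weierstrass points is with $i \neq j$, which is what the criterion of the second lemma requires. For completeness I would also mention the genus computation: by Riemann–Hurwitz applied to an unramified degree 2 cover, $g(C) = 2g(H) - 1 = 3$, so the statement indeed produces a hyperelliptic curve of genus 3, recovering Mumford's classical result.
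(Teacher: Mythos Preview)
Your proposal is correct and matches the paper's approach exactly: the corollary is stated without explicit proof, as an immediate consequence of the two preceding lemmas, precisely by writing $\eta = \mathcal{O}_H(w_i - w_j)$ and applying the Bor\'owka--Ortega criterion. Your added remarks on $\eta \neq 0$ and the Riemann--Hurwitz genus count are fine but not needed for what the paper asserts.
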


Recall that the space of degree 4 \'etale covers of genus \(g\) hyperelliptic curves is defined as $$\mathcal{RH}_g[4] := \{(H, \langle\eta\rangle) \: | \: H \text{ is hyperelliptic of genus } g; \eta \in JH[4]\setminus JH[2]\}/\simeq.$$ This space has a natural stratification given by the presentation of the two-torsion point $\eta^2$ in terms of the Weierstrass points of $H$. For $1 \leq m \leq \lfloor \frac{g+1}{2}\rfloor$ we define $$\mathcal{RH}_g[4]_{m} := \{(H, \langle\eta\rangle) \: | \: H \text{ is hyperelliptic of genus } g; \eta \in JH[4]\setminus JH[2] \text{ with } \eta^2 = \mathcal{O}_H(\sum_{i=1}^{2m}(-1)^iw_i)\}/\simeq.$$ Then we have $$\mathcal{RH}_g[4] = \bigsqcup_{i=1}^{\lfloor \frac{g+1}{2}\rfloor} \mathcal{RH}_g[4]_m.$$ 
We also introduce the notation $\mathcal{RH}_g[4]^{hyp} := \mathcal{RH}_g[4]_1$. It follows from Lemma \ref{genus3hyperelliptic} that $\mathcal{RH}_2[4]^{hyp} = \mathcal{RH}_2[4]$.  

Let $H$ be a hyperelliptic genus $g$ curve and $W$ be the set of Weierstrass points of $H$. Let $f: C \to H$ be an \'etale cyclic cover of degree 4 given by a line bundle $\eta \in JH[4]\setminus JH[2]$ with $\eta^2 = \mathcal{O}_H\left(\sum_{i=1}^{2m}(-1)^iw_i\right)$ for $1 \leq m \leq \lfloor \frac{g+1}{2}\rfloor$ and $w_i \in W$, and $\sigma$ be an automorphism of $C$ inducing $f$. Then $g(C) = 4g-3$ and by \cite{ries} the hyperelliptic involution $\iota$ on $H$ lifts to four involutions on $C$ denoted by $j, j\s, j\s^2, j\s^3$. The automorphisms $\sigma$ and $j$ generate the dihedral group $D_4 \subset \Aut(C)$. Since there are two conjugacy classes of involutions in $D_4$ given by $\{j, j\s^2\}$ and $\{j\s, j\s^3\}$ it follows that the quotient curve $C_{j} := C/\langle j\rangle$ is isomorphic to $C_{j\s^2} := C/\langle j\s^2\rangle$ and $C_{j\s} := C/\langle j\s\rangle$ is isomorphic to $C_{j\s^3} := C/\langle j\s^3\rangle$. 

Let $W_{\eta} = \{w_1,\ldots, w_{2m}\}$ be the subset of Weierstrass points of $H$ defining $\eta$ and $W^c_{\eta} = W \setminus W_{\eta}$ be its complement. The curve $C_{\sigma^2} := C/\langle \s^2 \rangle$ is a curve of genus $2g-1$ which is hyperelliptic if $[f:C \to H] \in \mathcal{RH}_g[4]^{hyp}$. Let $k: C_{\s^2} \to H$ be the covering map and $j, j\sigma$ be the lifts of $\iota$ to $C_{\s^2}$. Then, up to exchanging $j$ and $j\sigma$, we have $\Fix(j\s) = g^{-1}(W_{\eta})$ and $\Fix(j) = g^{-1}(W_{\eta}^c)$. Thus, there is the following diagram, where all maps except $f$ are double covers: 

\begin{center}
\label{diagram1}
\begin{tikzcd}
                                            &  & C \arrow[d, "h"] \arrow[rrd] \arrow[dd, "f"', bend right] \arrow[lld] &  &                                       \\
C_{j\sigma}  \arrow[d]           &  & C_{\sigma^2} \arrow[lld] \arrow[rrd] \arrow[d, "k"]                   &  & C_{j} \arrow[d]             \\
{C_{\sigma^2, j\sigma}} \arrow[rrd] &  & H \arrow[d]                                                           &  & {C_{\sigma^2, j} } \arrow[lld] \\
                                            &  & \mathbb{P}^1                                                          &  &                                      
\end{tikzcd}   
\end{center}

Using the fact that $|\Fix(j)| = |\Fix(j\sigma^2)|$, $|\Fix(j\s)| = |\Fix(j\sigma^3)|$ and the Riemann-Hurwitz formula, we get $g(C_{j\s}) = 2g-m-1, g(C_{j}) = g+m-2, g(C_{\sigma^2, j}) = m-1, g(C_{\sigma^2, j\s}) = g-m$. 

In the rest of the section we restrict our attention to covers $[f: C \to H] \in \mathcal{RH}_2[4]^{hyp}$. Note that in this case $C_{\sigma^2,j}$ is isomorphic to $\mathbb{P}^1$ so $C_j$ is hyperelliptic. We see that $f$ is constructed from line bundles $\nu \in JH[2]$ and $\xi \in JC_{\sigma^2}[2]$ such that there is $\eta \in JH[4] \setminus JH[2]$ with $\eta^2 = \nu$ and $k^*\eta = \xi$, where $k: C_{\sigma^2} \to H$ is a double cover given by $\nu$. 
In the construction below, we will describe such covers using the Weierstrass points on $H$. With a slight abuse of notation, we often identify Weierstrass points on a curve with their images in $\mathbb{P}^1$ under the double cover given by the hyperelliptic involution.

\begin{prop}
\label{bundles}
    Let $H$ be a genus $g$ hyperelliptic curve, $\{u_1, u_2, w_1, \ldots w_{2g}\}$ be the set of its Weierstrass points, $\nu = \mathcal{O}_H(u_1-u_2)$, and $k: C_{\sigma^2} \to H$ be the cover defined by $\nu$. Let $\psi: \mathbb{P}^1 \to \mathbb{P}^1$ be a double cover branched over $u_1, u_2$ and $\bar{\sigma}$ be the involution of $\mathbb{P}^1$ inducing $\psi$. With a slight abuse of notation, we denote the preimages of $w_i$ under $\psi$ by $Q_i = \{w_i, \bar\sigma(w_i)\}$. 
    Then $ \bigcup_{i=1}^{2g} Q_i$ is the set of Weierstrass points of $C_{\sigma^2}$ and, if we define $$Q := \left\{\mathcal{O}_{C_{\sigma^2}}\left(\sum_{i=1}^{2g}(-1)^iq_i\right) \ \mid \ q_i \in Q_i\right\}$$ then $$Q = \{k^*\eta \ | \ \eta^2 = \nu\}.$$
\end{prop}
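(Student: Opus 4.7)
The first assertion follows from a direct analysis of ramification in the fibre product diagram
\[
\begin{tikzcd}
C_{\sigma^2} \arrow[r, "h"] \arrow[d, "g"'] & \mathbb{P}^1 \arrow[d, "\psi"] \\
H \arrow[r, "\pi"'] & \mathbb{P}^1
\end{tikzcd}
\]
where $h$ and $\pi$ are the hyperelliptic quotients of $C_{\sigma^2}$ and $H$ respectively; this is a fibre product because $C_{\sigma^2}\to H$ is the \'etale double cover defined by $\nu=\mathcal{O}_H(u_1-u_2)$. Since $g$ is \'etale, $\psi\circ h$ is ramified exactly over the six Weierstrass points of $H$. Over $u_1,u_2$ the ramification is entirely accounted for by $\psi$, while $w_1,\dots,w_4$ are unbranched under $\psi$. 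Therefore $h$ is branched exactly over the preimages of $w_1,\dots,w_4$ in $\mathbb{P}^1$, namely $Q_1\cup Q_2\cup Q_3\cup Q_4$, whose $h$-preimages are the $8$ Weierstrass points of $C_{\sigma^2}$.

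For the second claim, the plan is to show that both $Q$ and $\{g^*\eta\mid \eta^2=\nu\}$ coincide with the $8$-element set $\Nm_g^{-1}(\nu)\cap JC_{\sigma^2}[2]$, where $\Nm_g\colon JC_{\sigma^2}\to JH$ is the norm map. First I verify the two inclusions into this common container. For any $\xi=\mathcal{O}_{C_{\sigma^2}}(q_1-q_2+q_3-q_4)\in Q$, the divisor $2(q_1-q_2+q_3-q_4)$ is the $h$-pullback of a degree-zero divisor on $\mathbb{P}^1$, so $\xi\in JC_{\sigma^2}[2]$; and $\Nm_g(\xi)=\mathcal{O}_H(w_1-w_2+w_3-w_4)$, which equals $\nu$ by applying part (ii) of the first lemma to $A=\{u_1,w_1,w_3\}$. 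The inclusion for $g^*\eta$ is immediate: $\Nm_g(g^*\eta)=\eta^2=\nu$ and $2\,g^*\eta=g^*\nu=0$.

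Matching cardinalities then forces equality. The common container $\Nm_g^{-1}(\nu)\cap JC_{\sigma^2}[2]$ is a coset of $\ker\Nm_g\cap JC_{\sigma^2}[2]$; for an \'etale double cover, $\ker\Nm_g$ has two connected components, each a translate of the elliptic Prym $P(g)$ contributing four $2$-torsion points, so the container has $8$ elements. The set $\{g^*\eta\mid \eta^2=\nu\}$ has size $16/|\ker g^*\cap JH[2]|=16/2=8$, since $\ker g^*=\{0,\nu\}$ for the cover defined by $\nu$. Finally, $|Q|=8$ because $Q$ is a coset of the $\FF_2$-subspace generated by the $2$-torsion classes $\delta_i:=\mathcal{O}_{C_{\sigma^2}}(q_i^+-q_i^-)$, writing $Q_i=\{q_i^+,q_i^-\}$; applying part (ii) of the first lemma on $C_{\sigma^2}$ to $A=\{q_1^+,q_2^+,q_3^+,q_4^+\}$ gives $\sum_i\delta_i=0$.

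The main technicality to address is showing that $\sum_i\delta_i=0$ is the only relation among the $\delta_i$. A putative further relation $\sum_{i\in I}\delta_i=0$ with $\emptyset\ne I\subsetneq\{1,2,3,4\}$ translates into the linear equivalence $\sum_{i\in I}q_i^+\sim\sum_{i\in I}q_i^-$ on $C_{\sigma^2}$. For $|I|=1$ this contradicts $g(C_{\sigma^2})\geq 1$; for $|I|=3$ it reduces to $|I|=1$ via $\sum_i\delta_i=0$; and for $|I|=2$ it would force both divisors to lie in the unique hyperelliptic pencil $g^1_2$ of $C_{\sigma^2}$, which is impossible since the Weierstrass points $q_i^+,q_j^+$ with $i\ne j$ have distinct images $w_i,w_j\in\mathbb{P}^1$ under $h$.
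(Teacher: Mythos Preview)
Your argument is correct and follows essentially the same route as the paper: both proofs show $Q$ and $\{g^*\eta\mid\eta^2=\nu\}$ are $8$-element subsets of the coset $\Nm_g^{-1}(\nu)\cap JC_{\sigma^2}[2]$ and conclude by cardinality. You are more explicit than the paper in two places---you prove the Weierstrass-point statement directly via the fibre-product diagram (the paper simply cites \cite{BO17}), and you compute the size of the container via the two components of $\ker\Nm_g$ (the paper just asserts that the elements of $Q$ are the only $2$-torsion points with norm $\nu$)---but the underlying strategy is the same.
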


\begin{proof}
    The fact that $ \bigcup_{i=1}^{2g} Q_i$ is the set of Weierstrass points of $C_{\sigma^2}$ is well-known \cite[Corollary 4.3]{BO17}. 
    
    Since $\Nm_{k}\circ k^* = m_2$, where $m_2$ is the multiplication by $2$ on $JH$, for any $\eta$ with $\eta^2 = \nu$ we have $(\Nm_{k}\circ k^*)(\eta) = \nu$. Note that $$\Nm_k\left(\mathcal{O}_{C_{\sigma^2}}\left(\sum_{i=1}^{2g}(-1)^iq_i\right)\right) = \mathcal{O}_H\left(\sum_{i=1}^{2g}(-1)^iw_i\right) = \mathcal{O}_H(u_1 - u_2) = \nu,$$ and the elements of $Q$ are the only 2-torsion points of $JC_{\sigma^2}$ with this property, which yields $\{k^*\eta \: | \: \eta^2 = \nu\} \subseteq Q$.
Moreover, $\mathcal{O}_{C_{\sigma^2}}(\sum_{i=1}^{2g}(-1)^iq_i) = \mathcal{O}_{C_{\sigma^2}}\left(\sum_{i=1}^{2g}(-1)^iq_i'\right)$ with $q_i, q_i' \in Q_i$ for all $i$ if and only if for all $i$ either $q_i = q_i'$ or $q_i \neq q_i'$, hence $|Q| = 2^{2g-1}$. Finally, there are precisely $2^{2g}$ 4-torsion points $\eta \in JH$ satisfying $\eta^2 = \nu$, and the equality $k^*\eta_1 = k^*\eta_2$ holds if and only if $\eta_1 = \eta_2 + \nu$, hence $|\{k^*\eta \ | \ \eta^2 = \nu\}| = 2^{2g-1}$, which implies $Q = \{k^*\eta \: | \: \eta^2 = \nu\}$.
\end{proof}

\begin{construction}
\label{construction}
    Using Proposition \ref{bundles}, we can view the construction of the cover $f:C \to H$ starting from $2g+2$ Weierstrass points $W = \{u_1, u_2, w_1,\ldots, w_{2g}\}$ with two distinguished points $u_1, u_2$ and a line bundle $\xi \in Q$. We take $\nu := \mathcal{O}_H(u_1-u_2)$ and $\xi := \mathcal{O}_{C_{\sigma^2}}(\sum_{i=1}^{2g}(-1)^iq_i)$ for some $q_i \in Q_i$. For $g=2$ we have the following diagram, where ramifications are indicated above each arrow. 
    \begin{center}
        \includegraphics[scale = 0.3]{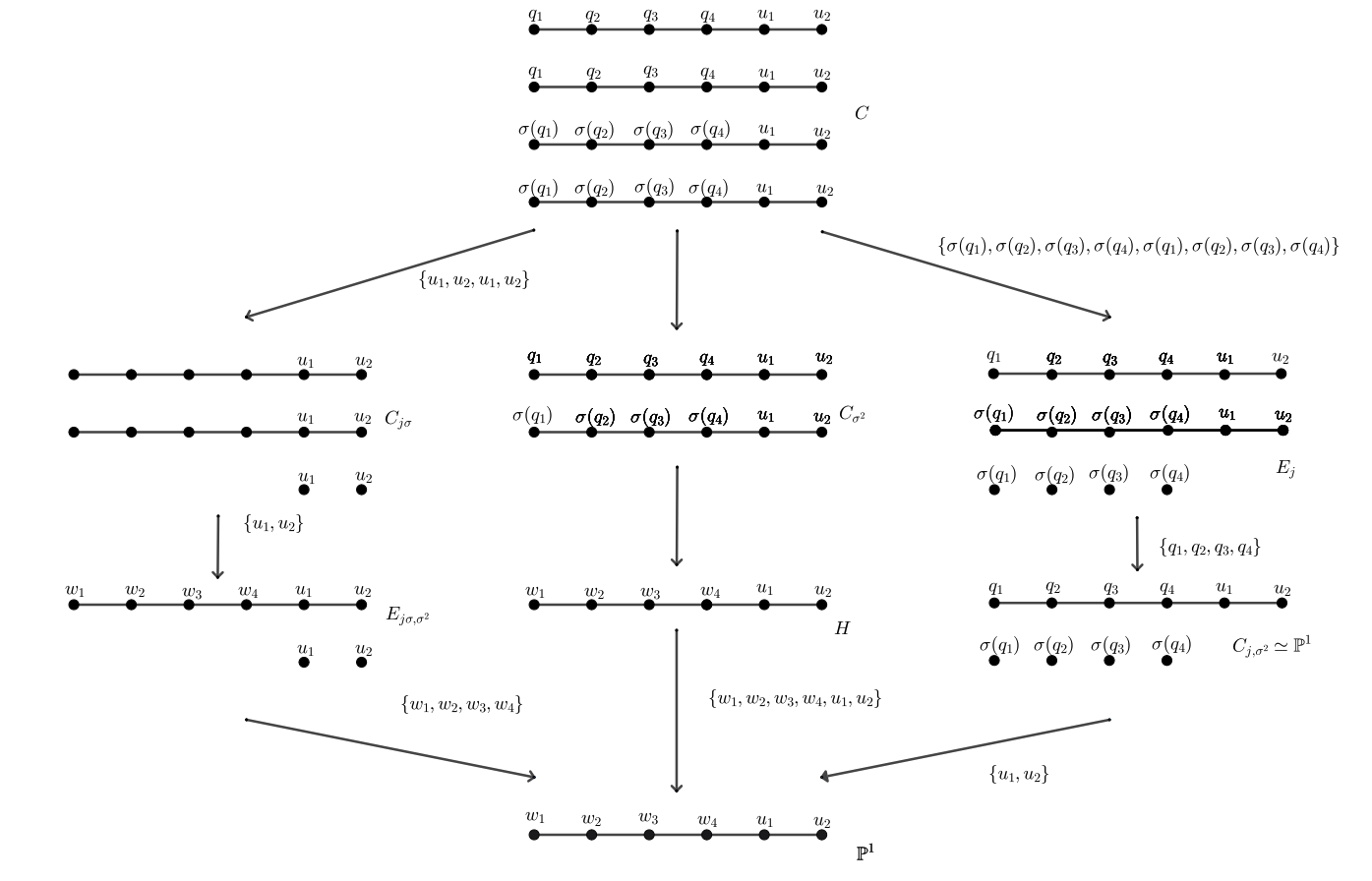}\label{diagram2}
    \end{center}
\end{construction}

Let us determine the equations of hyperelliptic curves in the diagram \ref{diagram2}. Define $$\Delta_g := \{\{t_1,\ldots, t_{2g-1}\} \subset \mathbb{C}\setminus \{0,1\} \ | \ t_i^2 \neq t_j^2 \text{ for any } i\neq j\}/\sim,$$ where \(\{t_1, \ldots, t_{2g-1}\} \sim \{s_1, \ldots, s_{2g-1}\}\) if and only if $\{t_1,\ldots, t_{2g-1}\} = \{s_1, \ldots, s_{2g-1}\}$ or $\{t_1, \ldots, t_{2g-1}\} = \{\frac{1}{s_1}, \ldots, \frac{1}{s_{2g-1}}\}$. Note that elements of $\Delta_g$ are unordered.

\begin{lem}
\label{equations}
    The hyperelliptic curves appearing in Diagram \ref{diagram2} have the following defining equations:
    \begin{align*}
    C_{\sigma^2}: \:\:\: y^2 &= (x^2 - 1)\prod_{i=1}^{2g-1}(x^2 - t_i^2), \\
    H: \:\:\: y^2 &= x(x - 1)\prod_{i=1}^{2g-1}(x - t_i^2), \\
    C_{\sigma^2,j\sigma}: \:\:\: y^2 &=(x - 1)\prod_{i=1}^{2g-1}(x - t_i^2), \\
    C_{j}: \:\:\: y^2 &=(x - 1)\prod_{i=1}^{2g-1}(x - t_i), \\
    C_{j\sigma^2}: \:\:\: y^2 &=(x + 1)\prod_{i=1}^{2g-1}(x + t_i).
    \end{align*} for some $(t_1,\ldots,t_{2g-1}) \in \Delta$.
\end{lem}

\begin{proof}
   
   By the projective change of coordinates we can assume that $w_i = [t_i^2:1]$ for $i=1,\ldots,2g-1$, $w_{2g} = [1:1]$, $u_1 = [1:0]$ and $u_2 = [0:1]$. Then the involution $\overline{\sigma}$ from Proposition \ref{bundles} is given by $[x:y] \mapsto [-x:y]$. It follows that the cover $\psi: \mathbb{P}^1 \simeq C_{\s^2, j} \to C_{\s, j} \simeq \mathbb{P}^1$ is given by $[x:1] \mapsto [x^2:1]$. Assuming $q_i = t_i$ for $i=1,\ldots,2g-1$, the equations follow from the straightforward generalization of diagram \ref{diagram2}. 
   
\end{proof}

\begin{rem}
\label{useful}
    Fixing the coordinates of the $2g+2$ points on the projective line $w_i = [s_i^2:1]$ for $i=1,\ldots,2g-1$, $w_{2g} = [1:1]$, $u_1 = [1:0]$ and $u_2 = [0:1]$, there are $2^{2g-1}$ possible curves $C_j$ given by the defining equation $$y^2 = (x-1)\prod_{i=1}^{2g-1}(x\pm s_i).$$ The defining equation of $C_{j\sigma^2}$ is then $$y^2 = (x+1)\prod_{i=1}^{2g-1}(x\mp s_i),$$ and from this it is evident that $C_j$ is isomorphic to $C_{j\sigma^2}$. 

    The choice of a line bundle in $Q$ in Proposition \ref{bundles} is equivalent to the choice of signs above.
\end{rem}

Construction \ref{construction} gives us a more convenient description of elements of the of the moduli space $\mathcal{RH}_g[4]^{hyp}$.

\begin{thm}
\label{moduli}
    There is a bijection between $\mathcal{RH}_g[4]^{hyp}$ and $\Delta_g$.
\end{thm}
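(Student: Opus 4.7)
The plan is to construct inverse maps $\Psi \colon \Delta \to \mathcal{R}_2^4$ and $\Phi \colon \mathcal{R}_2^4 \to \Delta$ and verify they are mutually inverse. First I would define $\Psi$ directly from Construction \ref{construction} and Lemma \ref{equations}: given $\{t_1, t_2, t_3\} \in \Delta$, take $H$ to be the genus 2 curve with Weierstrass points $\{[1:0], [0:1], [1:1], [t_1^2:1], [t_2^2:1], [t_3^2:1]\}$, set $\nu = \mathcal{O}_H([1:0] - [0:1])$, and $\xi = \mathcal{O}_{C_{\sigma^2}}([t_1:1] - [t_2:1] + [t_3:1] - [1:1])$. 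Proposition \ref{bundles} then yields $\eta \in JH[4] \setminus JH[2]$ with $g^*\eta = \xi$ and $\eta^2 = \nu$, so that $\Psi(\{t_1, t_2, t_3\}) := [(H, \langle \eta \rangle)]$.

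For the opposite direction $\Phi$, I would use Proposition \ref{bundles} in reverse: given $[(H, \langle \eta \rangle)] \in \mathcal{R}_2^4$, extract $\nu = 2\eta$ (which identifies the distinguished pair $\{u_1, u_2\}$ among the Weierstrass points of $H$) and $\xi = g^*\eta$ (which, by Proposition \ref{bundles}, determines lifts $q_i \in Q_i$ for $i = 1, 2, 3, 4$, up to simultaneous flip of all four). Choose the projective transformation of $\mathbb{P}^1$ sending $u_1, u_2, q_4$ to $[1:0], [0:1], [1:1]$ respectively, and read off $q_i = [t_i:1]$ for $i = 1, 2, 3$. Set $\Phi([(H, \langle \eta \rangle)]) := \{t_1, t_2, t_3\}$; this triple lies in $\Delta$ because the Weierstrass points are distinct and disjoint from the distinguished pair.

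The first main step is well-definedness of $\Psi$ on unordered triples. A transposition in $S_3$ of the indices $\{1,2,3\}$ changes $\xi$ by a term of the form $\pm 2\, \mathcal{O}_{C_{\sigma^2}}(q_i - q_j)$. Since $q_i$ and $q_j$ are Weierstrass points of the hyperelliptic curve $C_{\sigma^2}$, each $\mathcal{O}(q_i - q_j)$ is 2-torsion (using that the hyperelliptic involution fixes Weierstrass points and acts by $-1$ on the Jacobian), hence $2\,\mathcal{O}(q_i - q_j) = 0$. Therefore $\xi$, and consequently $\langle \eta \rangle$, are invariant under reordering of the $t_i$'s.

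The second main step is well-definedness of $\Phi$ and the inverse identities: one must trace the effect of the remaining discrete ambiguities in the normalization --- the swap $u_1 \leftrightarrow u_2$, the choice of which $q_i$ plays the role of $q_4$, and the simultaneous-flip representative of $\xi$ --- on the resulting triple, and show that all such transformations yield the same unordered element of $\Delta$. This is the main obstacle, as it requires careful bookkeeping of how each discrete choice interacts with the projective transformation on $\mathbb{P}^1$ and its lift to the hyperelliptic cover $C_{\sigma^2}$, together with the $S_3$-invariance of $\xi$ established above to absorb the reorderings of the remaining indices. Once these symmetries are reconciled, the identities $\Phi \circ \Psi = \mathrm{id}_\Delta$ and $\Psi \circ \Phi = \mathrm{id}_{\mathcal{R}_2^4}$ follow by direct unwinding of the two constructions.
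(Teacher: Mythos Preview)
Your overall strategy coincides with the paper's: both build $\Psi$ from Construction~\ref{construction}/Lemma~\ref{equations} and recover $\Phi$ by normalising $u_1,u_2$ to $\infty,0$ and one chosen lift $q_4$ to $1$. Your verification that $\xi=\mathcal{O}_{C_{\sigma^2}}(q_1-q_2+q_3-q_4)$ is invariant under permuting the $q_i$ (since each $\mathcal{O}(q_i-q_j)$ is $2$-torsion) is correct and in fact more explicit than what the paper writes.

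The genuine gap is in your ``second main step.'' You assert that the swap $u_1\leftrightarrow u_2$, the choice of which $q_i$ is normalised to $1$, and the simultaneous flip all produce the \emph{same} unordered element of $\Delta$; but this is false. With $(u_1,u_2,q_4)=(\infty,0,1)$ and $q_i=t_i$, the swap $u_1\leftrightarrow u_2$ lifts upstairs to $x\mapsto 1/x$ and sends the triple to $\{1/t_1,1/t_2,1/t_3\}$; choosing instead to normalise $q_3$ to $1$ (via $x\mapsto x/t_3$) sends it to $\{t_1/t_3,\,t_2/t_3,\,1/t_3\}$. For generic $t_i$ these are distinct elements of $\Delta$, yet your own $2$-torsion argument shows that $\xi$ --- and hence the isomorphism class $(H,\langle\eta\rangle)$ --- is unchanged under each of these moves. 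Thus $\Psi$ is not injective and $\Phi$, as you have set it up, is not well-defined as a map into $\Delta$; the orbit of a generic triple under these residual symmetries has eight elements. The paper's proof is equally terse at exactly this point, so this is not a deviation from the paper's approach but a place where both arguments, as written, require either a further identification on $\Delta$ or a canonical rule breaking the eightfold ambiguity.
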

\begin{proof}

It follows from Construction \ref{construction} that the data $[(H, \langle \eta \rangle)] \in \mathcal{R}_g[4]^{hyp}$ is equivalent to the data of $2g+2$ points $\{w_1,\ldots,w_{2g};u_1,u_2\}$ with a distinguished pair $\{u_1,u_2\}$ (up to projective equivalence respecting the pair) and the choice of an element in each fiber over $w_i$ of the covering $\mathbb{P}^1 \to \mathbb{P}^1$ branched at $\{u_1, u_2\}$. Under the projective change of coordinates from the proof of Lemma \ref{equations} we see that the data above is equivalent to an element $\{t_1, \ldots, t_{2g-1}\} \in \Delta_g$. Note that the equivalence relation defining $\Delta_g$ corresponds to the projective transformation that fixes \(w_{2g}\) and swaps \(u_1\) and \(u_2\).

To better illustrate the equivalence, let us describe the inverse mapping: starting from an element $\{t_1, \ldots, t_{2g-1}\} \in \Delta_g$, we define $$ w_i = [t_i:1] \text{ for } i=1,\ldots, 2g-1,  w_{2g} = [1:1], u_1 = [1:0], u_2 = [0:1]$$ and take $\{[t_1:1], \ldots, [t_{2g-1}:1], [1:1]\}$ which corresponds to the choice of an element in $Q$. 


\end{proof}

\section{Geometry of the Prym variety}
\label{sec3}

In this section, we study Prym varieties of elements of $\mathcal{RH}_g[4]^{hyp}$. We use the notation from the previous section unless explicitly stated, and we denote morphisms of Jacobians induced by automorphism of curves by the same letters. Sometimes we omit pullbacks when we consider abelian subvarieties. 
Let $$(P, \Xi) := P(f)$$ be the Prym variety of a cover $f: C \to H$ with the induced polarization $\Xi$. 

Before we determine the isotypical decomposition of $P$, let us introduce the following notation. Let $A$ be an abelian variety and $A_1, A_2 \ldots, A_n \subseteq A$ be a set of its abelian subvarieties with the induced polarizations. We denote the symmetric idempotent of $A_k$ by $\varepsilon_{k} \in \End_{\mathbb{Q}}(A)$ for $1 \leq k \leq n$. We write $$A = A_1^{(e_1)} \boxplus A_2^{(e_2)} \boxplus \ldots \boxplus A_n^{(e_n)}$$ if $e_k$ is the polarization type of \(A_k\) for $1 \leq k \leq n$ and $$\sum_{i=1}^n \varepsilon_i = 1$$. 

\begin{lem}
\label{isotypical}
Let $P = P(f)$ be the Prym variety of the cover $[f:C \to H] \in \mathcal{RH}_g[4]^{hyp}$. We have the following decomposition of $P$:
    $$P = JC_j^{(2)} \boxplus JC_{j\sigma^2}^{(2)} \boxplus JC_{j\sigma, \sigma^2}^{(4)}.$$ 
\end{lem}

\begin{proof}
    Note that $JC_j, JC_{j\sigma^2}$ and $JC_{j\sigma, \sigma^2}$ have exponents $2,2,4$ as subvarieties of $JC$, respectively, since the corresponding covers are of degree $2,2,4$. Hence, their symmetric idemponents are given by
$$\varepsilon_{C_j} = \frac{1 + j}{2},$$
$$\varepsilon_{C_{j\sigma^2}} = \frac{1 + j\sigma^2}{2},$$
$$\varepsilon_{C_{j\sigma,\sigma^2}} = \frac{1 + j\sigma + \sigma^2 + j\sigma^3}{4}.$$

Since $g(C_{\sigma^2, j}) = g(C_{\sigma, j}) =  0$ we have $$1 + j + \sigma^2 + j\sigma^2 = \sigma + \sigma^3 + j\sigma + j\sigma^3 = 0.$$

Using these facts we obtain $$\varepsilon_{C_j} + \varepsilon_{C_{j\sigma^2}} + \varepsilon_{C_{j\sigma,\sigma^2}} = \frac{3 - \sigma - \sigma^2 - \sigma^3}{4} = 1 - \varepsilon_{JC_{\sigma}} = \varepsilon_P.$$
\end{proof}

By $\mu$ we will denote the addition map $$\mu: JC_{j}\times JC_{j\s^2} \times JC_{j\s, \s^2} \to P.$$

\begin{lem}
\begin{enumerate}
\label{geomprym}
    \item[i)] With the notation from the diagram \ref{diagram2} we have
    $$\begin{aligned}
\ker{\mu} = \Bigg\{ 
& \left( \sum_{i \in A_+} w_i - \sum_{j \in A_-} w_j, \ \sum_{i \in A_+} \s(q_i) - \sum_{j \in A_-} \s(q_j), \ \sum_{i \in A_+} q_i - \sum_{j \in A_-} q_j \right) \\
& \in JC_{j\sigma, \sigma^2}[2] \times JC_{j\sigma^2}[2] \times JC_{j}[2] \\
& \;\Bigg|\; A_+, A_- \subset \{1,\ldots,2g\}, \ |A_+| = |A_-|, A_+ \cap A_- = \varnothing \Bigg\}.
\end{aligned}$$
    \item[ii)] Let $\pi$ be the quotient map $P \to P/K(\Xi)[2]$ and $\alpha = \pi \circ \mu$ be the composition. Then $$(P/K(\Xi)[2])^{(1,\ldots,1)} = \alpha(JC_{j}\times JC_{j\s^2})^{(1,\ldots,1)} \times (JC_{j\s, \s^2}/JC_{j\s, \s^2}[2])^{(1,\ldots,1)}.$$
\end{enumerate}
    
\end{lem}

\begin{proof}
    Recall that $h$ denotes the cover $C \to C_{\sigma^2}$. Note that $P(h) = JC_{j}^{(2,\ldots,2)} \boxplus JC_{j\sigma^2}^{(2,\ldots,2)}$ and $P(h)$ has polarization type $(2,\ldots,2)$. Therefore, it follows that $P(h) = JC_{j} \times JC_{j\sigma^2}$ with the induced polarizations and, in particular, $JC_{j} \cap JC_{j\sigma^2} = \{0\}$ in $P$.

    Since $JH$ has polarization of type $(1,4,\ldots, 4)$, it follows that $P$ has polarization of type $(1,\ldots, 1,4, \ldots, 4)$ where 4 appears $g-1$ times. Therefore, we have $\deg(\mu) = \frac{2^{g-1}\cdot 2^{g-1}\cdot 4^{g-1}}{4^{g-1}} = 4^{g-1}$. Note that $\mathbb{Z}_4^{2g-2} \simeq K(\Xi) \subset \mu(K(\mu^*\Xi))$ since $\mu$ is surjective. Since $\ker \mu \subset K(\mu^*\Xi) \simeq \mathbb{Z}_2^{2g-2} \times \mathbb{Z}_2^{2g-2} \times \mathbb{Z}_4^{2g-2}$
it follows that $\ker \mu \simeq \mathbb{Z}_2^{2g-2}$.

Denoting $\pi_j:C \to C_j, \pi_{j\sigma^2}: C \to C_{j\sigma^2}, \pi_{j\sigma, \sigma^2}: C \to C_{j\sigma, \sigma^2}$
    one readily checks that for any two disjoint subsets $A_+, A_- \in \{1,\ldots,2g\}$ of equal cardinality we have $$\pi_j^*\left(\sum_{i \in A_+} q_i - \sum_{j \in A_-} q_j\right) + \pi^*_{j\sigma^2}\left(\sum_{i \in A_+} \s(q_i) - \sum_{j \in A_-} \s(q_j)\right) + \pi_{j\sigma, \sigma^2}^*\left(\sum_{i \in A_+} w_i - \sum_{j \in A_-} w_j\right) = 0$$ so the first part of the lemma is proved. 

In order to prove the second part, we denote the corresponding polarizations on Jacobians of the decomposition of $P$ by $\Xi_{j}, \Xi_{j\sigma^2}, \Xi_{j\sigma,\sigma^2}$ and introduce the notation $$K(\Xi_{j}) = JC_{j}[2] = \langle p_1, \ldots, p_{2g-2} \rangle,$$
$$K(\Xi_{j\sigma^2}) = JC_{j\sigma^2}[2] = \langle r_1, \ldots, r_{2g-2} \rangle,$$
$$K(\Xi_{j\sigma, \sigma^2}) = JC_{j\s, \s^2}[4] = \langle v_1, \ldots, v_{2g-2} \rangle,$$
$$K(\Xi) =  \langle s_1, \ldots, s_{2g-2} \rangle.$$

Then for any $j \in \{1,\ldots, 2g-2\}$ we have $$s_j = \sum_{i=1}^{2g-2}a_i^jp_i + \sum_{i=1}^{2g-2}b_i^jr_i + \sum_{i=1}^{2g-2}c_i^jv_i$$ for some integers $a_i^j, b_i^j, c_i^j$ with $i, j \in \{1,\ldots, 2g-2\}$. In particular, $$2s_j =  2\sum_{i=1}^{2g-2}c_i^jv_i$$ Therefore, $\mathbb{Z}_2^{2g-2} \simeq K(\Xi)[2] = \langle 2s_1, \ldots, 2s_{2g-2}\rangle = \langle 2v_1, \ldots, 2v_{2g-2} \rangle = JC_{j\s, \s^2}[2]$, and $\alpha_{|JC_{j\s, \s^2}}$ is the multiplication by 2. Thus, it follows from the first part of the lemma that, up to the change of generators of $JC_{j\sigma^2}[2]$, we have  $$\ker \alpha = \langle p_1+r_1, \ldots, p_{2g-2}+r_{2g-2}, 2v_1, \ldots, 2v_{2g-2}\rangle \simeq \mathbb{Z}_2^{4g-4}.$$ This implies that $\alpha(JC_{j} \times JC_{j\s^2}) \cap \alpha(JC_{j\s,\s^2}) = \{0\}$, which finishes the proof.

\end{proof}

\section{Fibers of the Prym map in genus 2}
\label{sec4}

Let $\mathcal{A}^{(1,1,4)}_3$ be the moduli space of $(1,1,4)$-polarized abelian threefolds.  In this section, we will describe the structure of the fibers of the Prym map $$\mathcal{P}_2[4]: \mathcal{R}_{2}[4] \to \mathcal{A}_3^{(1,1,4)}.$$ 
Note that for $g=2$ the curves $C_j, C_{j\s^2}, C_{j\sigma, \sigma^2}$ defined in Section 2 are elliptic curves, so we denote them by $E_j, E_{j\s^2}, E_{j\sigma, \sigma^2}$ respectively. Moreover, we often identify elliptic curves with their Jacobians.

Recall that in Theorem \ref{moduli} we showed that there is a bijection between $\mathcal{R}_{2}[4]$ and $$\Delta_2 = \{\{t_1,t_2,t_3\} \subset \mathbb{C}\setminus \{0,1\} \ | \ t_1^2 \neq t_2^2 \neq t_3^2 \neq t_1^2\}/\sim,$$ where \(\{t_1, t_2, t_3\} \sim \{s_1, s_2, s_3\}\) if and only if $\{t_1, t_2, t_3\} = \{s_1, s_2, s_3\}$ or $\{t_1, t_2, t_3\} = \{\frac{1}{s_1}, \frac{1}{s_2}, \frac{1}{s_3}\}$. It follows from Construction \ref{construction}, Lemma \ref{equations} and Remark \ref{useful} that the Prym map $\mathcal{P}_2[4]$ is given by $$\mathcal{P}_2[4]: \Delta_2 \ni T = \{t_1,t_2,t_3\} \mapsto ((E_T \times E_T \times F_T)/\langle e_1^T + e_1^T + f_1^T, e_2^T + e_2^T + f_2^T\rangle, \Xi_T) \in \mathcal{A}_3^{(1,1,4)},$$ where the pullback of $\Xi_T$ to $E_T \times E_T \times F_T$ is isomorphic to the product polarization $ \mathcal{O}_{E_T}(2) \otimes \mathcal{O}_{E_T}(2) \otimes \mathcal{O}_{F_T}(4)$. Here, $E_T$ and $F_T$ are elliptic curves defined by the equations $$y^2 =(x - 1)(x - t_1)(x - t_2)(x - t_3)$$ and $$y^2 =(x - 1)(x - t_1^2)(x - t_2^2)(x - t_3^2)$$ respectively with $$e_1^T = [(1,0)-(t_1,0)], \quad e_2^T = [(1,0) - (t_2,0)],$$ $$f_1^T = [(1,0)-(t_1^2,0)], \quad f_2^T = [(1,0)-(t_2^2,0)].$$

First, we prove the following important result.

\begin{prop}
\label{important}
    The equality $\mathcal{P}_2[4](T) = \mathcal{P}_2[4](S)$ with $T = [\{t_1,t_2,t_3\}], S = [\{s_1, s_2, s_3\}]$ holds if and only if there is a choice of indices of elements of $S$ such that $$\frac{(t_2 - 1)(t_3 - t_1)}{(t_2 - t_1)(t_3 - 1)} = \frac{(s_2 - 1)(s_3 - s_1)}{(s_2 - s_1)(s_3 - 1)}$$ and $$
        \frac{(t_2^2 - 1)(t_3^2 - t_1^2)}{(t_2^2 - t_1^2)(t_3^2 - 1)} =  \frac{(s_2^2 - 1)(s_3^2 - s_1^2)}{(s_2^2 - s_1^2)(s_3^2 - 1)}.$$
 
\end{prop}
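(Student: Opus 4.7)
The plan is to reformulate the equality $\mathcal{P}_4(T) = \mathcal{P}_4(S)$ as the existence of compatible isomorphisms $\phi: E_T \to E_S$ and $\chi: F_T \to F_S$ of abelian varieties inducing the \emph{same} permutation of indices on the labeled $2$-torsion points $\{e_i^{\bullet}\}$ and $\{f_i^{\bullet}\}$, and then to translate this combinatorial compatibility into the two displayed cross-ratio equalities via Möbius transformations on $\mathbb{P}^1$.

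\textbf{Sufficiency.} Suppose the two equalities hold after relabeling the $s_i$. Each is the Möbius-invariance condition $[t_2, t_3; 1, t_1] = [s_2, s_3; 1, s_1]$ (respectively for the squares), so there exist Möbius maps $\bar\phi, \bar\chi: \mathbb{P}^1 \to \mathbb{P}^1$ fixing $1$ and sending $t_i \mapsto s_i$ and $t_i^2 \mapsto s_i^2$ respectively. These lift to isomorphisms $\phi: E_T \to E_S$ and $\chi: F_T \to F_S$ of abelian varieties (preserving the origin $(1,0)$), so $\phi(e_i^T) = e_i^S$ and $\chi(f_i^T) = f_i^S$. The product $(\phi, \phi, \chi)$ preserves the $(2,2,4)$-polarization on $E_T \times E_T \times F_T$ and sends the generators $(e_i^T, e_i^T, f_i^T)$ of $\ker \mu_T$ onto the corresponding generators of $\ker \mu_S$, so it descends to the desired polarized isomorphism $\mathcal{P}_4(T) \simeq \mathcal{P}_4(S)$.

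\textbf{Necessity.} Conversely, starting from a polarized isomorphism $\Phi: \mathcal{P}_4(T) \to \mathcal{P}_4(S)$, the canonical isotypical decomposition of Lemma \ref{geomprym} yields polarized restrictions $\Phi|_{E^{(2)}}: (E_T \times E_T, (2,2)) \to (E_S \times E_S, (2,2))$ and $\chi = \Phi|_F : (F_T, (4)) \to (F_S, (4))$. A short Néron--Severi calculation shows that $\Aut(E \times E, \mathcal{O}(2) \boxtimes \mathcal{O}(2))$ consists of the $8$ signed permutation matrices for generic $E$, so $\Phi|_{E^{(2)}}$ restricts on the diagonal $\{(e, e) : e \in E_T[2]\}$ to $(\phi(e), \phi(e))$ for a single isomorphism $\phi: E_T \to E_S$ (an accompanying factor swap is absorbed into the symmetry of $\ker \mu_T$ in its first two coordinates). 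Assembling $\tilde\Phi = (\phi, \phi, \chi)$, the identity $\mu_S \circ \tilde\Phi = \Phi \circ \mu_T$ follows from $\Phi$ being a group homomorphism, so $\tilde\Phi(\ker \mu_T) = \ker \mu_S$; evaluating on each generator then forces $\phi(e_i^T) = e_{\pi(i)}^S$ and $\chi(f_i^T) = f_{\pi(i)}^S$ for a common $\pi \in S_3$. Both $\phi$ and $\chi$ commute with the hyperelliptic involution and so descend to Möbius transformations on $\mathbb{P}^1$ sending $(1, t_1, t_2, t_3) \mapsto (1, s_{\pi(1)}, s_{\pi(2)}, s_{\pi(3)})$ and $(1, t_i^2) \mapsto (1, s_{\pi(i)}^2)$; Möbius-invariance of the cross-ratio then gives both claimed equalities after the relabeling $s_i \mapsto s_{\pi(i)}$.

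\textbf{Main obstacle.} The most delicate point will be pinning down the shape of the lift $\Phi|_{E^{(2)}}$, which a priori could mix the two $E_T$-factors in a nontrivial way; the polarization constraint, via the Néron--Severi computation above, restricts it to signed permutation matrices, and the possible factor swap is harmless by the symmetry of $\ker \mu_T$. Loci where $E_T$ or $F_T$ carries complex multiplication require minor additional bookkeeping but do not affect the conclusion.
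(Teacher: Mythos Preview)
Your sufficiency argument matches the paper's. The necessity argument, however, has a genuine gap at its first step.

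You assert that a polarized isomorphism $\Phi:\mathcal{P}_4(T)\to\mathcal{P}_4(S)$ automatically restricts to the subvarieties $\mu_T(E_T\times E_T)$ and $\mu_T(F_T)$, citing Lemma~\ref{geomprym}. But $(P,\Xi)$ has type $(1,1,4)$, not $(1,1,1)$, and a $\boxplus$-decomposition of a non-principally polarized abelian variety is not in general preserved by polarized automorphisms: nothing a priori prevents $P$ from containing other elliptic curves with induced polarization of degree $2$ or $4$ (certainly so if $E_T$ and $F_T$ are isogenous), and $\Phi$ could carry $\mu_T(F_T)$ onto one of those. This is exactly the difficulty you defer to ``minor bookkeeping'' on CM loci, but it is not confined to CM loci. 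Your N\'eron--Severi computation of $\Aut(E\times E,(2,2))$ only becomes relevant \emph{after} one knows $\Phi$ respects the splitting, so it cannot be used to establish that fact.

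The paper supplies precisely the missing mechanism, and it is the real content of Lemma~\ref{geomprym}(ii): pass to the quotient $\pi:P\to P/K(\Xi)[2]$, which is \emph{principally} polarized and splits as the product $\alpha(E_T\times E_T)\times(F_T/F_T[2])$. The Decomposition Theorem for principally polarized abelian varieties makes this product intrinsic to $(P,\Xi)$. Pulling the elliptic factor back via $\pi$ recovers $\mu_T(F_T)\subset P$; its complementary abelian subvariety is $P(h)=\mu_T(E_T\times E_T)$, and their intersection recovers $\ker\mu_T$. Hence the entire addition map $\mu$ is determined by $(P,\Xi)$ alone, uniformly over $\Delta$, and the cross-ratio equalities follow as in your last paragraph. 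Your outline becomes correct once this passage to the principally polarized quotient is inserted; without it the claim that $\Phi$ ``restricts'' is unsupported.
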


\begin{proof}
    The equalities $$\frac{(t_2 - 1)(t_3 - t_1)}{(t_2 - t_1)(t_3 - 1)} =  \frac{(s_2 - 1)(s_3 - s_1)}{(s_2 - s_1)(s_3 - 1)}$$ and $$
        \frac{(t_2^2 - 1)(t_3^2 - t_1^2)}{(t_2^2 - t_1^2)(t_3^2 - 1)} = \frac{(s_2^2 - 1)(s_3^2 - s_1^2)}{(s_2^2 - s_1^2)(s_3^2 - 1)}$$ imply that there exist isomorphisms $\phi_1: E_{T} \to E_S$ and $\phi_2: F_T \to F_S$ such that $$\phi(e_1^T) = e_1^S, \quad \phi(e_2^T) = e_2^S, \quad \phi(f_1^T) = f_1^S, \quad \phi(f_2^T) = f_2^S.$$ Thus, $\mathcal{P}_2[4](T) = \mathcal{P}_2[4](S)$ from the definition.

        For the opposite implication, let $(P, \Xi) = \mathcal{P}_2[4](S) = \mathcal{P}_2[4](T)$ and 
        $f: C \to H$ be the covering given by $T$. Below we use the notation from the previous sections. It follows from Lemma \ref{geomprym} that $P/\Xi[2]$ is a principally polarised abelian threefold isomorphic to a product of an elliptic curve $F = \alpha(E_{j\s, \s^2})$ and an abelian sufrace $S = \alpha(E_j \times E_{j\s^2})$. Note that by \cite[Decomposition theorem 4.3.1]{BL} the isomorphism above is unique. Using the notation from Diagram \ref{diagram1} and Lemma \ref{geomprym}, we see that $P(h)^{(2,2)} = E_j^{(2)} \times E_{j\s^2}^{(2)}$ is a complementary abelian subvariety to $\pi^{-1}(F) \simeq E_{j\s, \s^2}$ in $P$. Note that the intersection $P(h) \cap \pi^{-1}(E_{j\s, \s^2})$ gives the kernel $\ker \mu$ of the addition map $$ \mu: E_j \times E_{j\s^2} \times E_{j\s, \s^2} \to P.$$ This shows that we can recover the map $\mu$ from $(P, \Xi)$ independently of the cover in its preimage. Therefore, there exist isomorphisms $\phi_1: E_T \to E_S$ and $\phi_2: F_T \to F_S$ such that $$\phi(e_1^T) = e_1^S, \quad \phi(e_2^T) = e_2^S, \quad \phi(f_1^T) = f_1^S, \quad \phi(f_2^T) = f_2^S,$$ hence we are done.
        
\end{proof}

\begin{lem}\label{difflambdas}
    Let $$\lambda_1 := \frac{(t_2 - 1)(t_3 - t_1)}{(t_2 - t_1)(t_3 - 1)}$$ and $$\lambda_2 := 
        \frac{(t_2^2 - 1)(t_3^2 - t_1^2)}{(t_2^2 - t_1^2)(t_3^2 - 1)}$$ for $\{t_1,t_2,t_3\} \in \Delta_2$. Then $\lambda_1 \neq \lambda_2$.
\end{lem}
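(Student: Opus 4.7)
The approach is to factor everything using difference of squares and reduce the claim to a single clean algebraic identity that is ruled out by the defining conditions of $\Delta$.

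First I would compute the ratio $\lambda_2/\lambda_1$. Using $t_i^2 - t_j^2 = (t_i-t_j)(t_i+t_j)$ and $t_i^2 - 1 = (t_i-1)(t_i+1)$, each factor of $\lambda_2$ splits into a factor already appearing in $\lambda_1$ times a sum, and the quotient simplifies to
$$\frac{\lambda_2}{\lambda_1} = \frac{(t_2+1)(t_3+t_1)}{(t_2+t_1)(t_3+1)}.$$
Before drawing any conclusion from this I would verify that the manipulation is legal: the condition $t_i^2 \neq t_j^2$ for $i \neq j$ together with $t_i \notin \{0,1\}$ guarantees that $t_2 \neq -t_1$, $t_3 \neq -1$, $t_2 \neq t_1$, and $t_3 \neq 1$, so none of the denominators of $\lambda_1$, $\lambda_2$, or of the ratio above vanish.

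Next I would rule out the degenerate cases $\lambda_1 = 0$ and $\lambda_1 = \infty$: the first would force $t_2 = 1$ or $t_3 = t_1$, while the second would force $t_2 = t_1$ or $t_3 = 1$, each excluded by the definition of $\Delta$. Hence $\lambda_1 \in \mathbb{C}^*$, and the equation $\lambda_1 = \lambda_2$ is equivalent to $\lambda_2/\lambda_1 = 1$, i.e.
$$(t_2+1)(t_3+t_1) = (t_2+t_1)(t_3+1).$$

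Finally I would expand this: after cancelling $t_2 t_3$ on both sides the identity reduces to $t_1 t_2 + t_3 = t_2 + t_1 t_3$, i.e. $t_1(t_2 - t_3) = t_2 - t_3$. Since $t_2^2 \neq t_3^2$ implies in particular $t_2 \neq t_3$, dividing through gives $t_1 = 1$, contradicting $t_1 \in \mathbb{C} \setminus \{0,1\}$. There is no real obstacle here; the only delicate point is to keep track of which exclusions in $\Delta$ are used to guarantee that every denominator is nonzero and that the final cancellation of $t_2 - t_3$ is legitimate.
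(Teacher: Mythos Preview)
Your proof is correct and follows essentially the same route as the paper: both compute $\lambda_2/\lambda_1 = \dfrac{(t_2+1)(t_3+t_1)}{(t_2+t_1)(t_3+1)}$, set it equal to $1$, and reduce to the factorization $(t_1-1)(t_2-t_3)=0$, which is excluded by the definition of $\Delta$. Your version is actually a bit more careful than the paper's in checking that all denominators are nonzero and that $\lambda_1\in\mathbb{C}^*$ before dividing.
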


\begin{proof}
    The equations above are equivalent to  $$\begin{cases}\label{syst}
        \frac{(t_2 - 1)(t_3 - t_1)}{(t_2 - t_1)(t_3 - 1)} = \lambda_1, \\
        \frac{(t_2 + 1)(t_3 + t_1)}{(t_2 + t_1)(t_3 + 1)} = \frac{\lambda_2}{\lambda_1}.
    \end{cases}$$
    Note that for $\lambda_1 = \lambda_2$ the second equation transforms into $$(t_2 - t_3)(t_1 - 1) = 0,$$ so any solution of system \ref{syst} is not contained in $\Delta_2$.

   
\end{proof}

Together, Proposition \ref{important} and Lemma \ref{difflambdas} imply

\begin{cor}
    Each fiber of $\mathcal{P}_2[4]$ is isomorphic to $$\Delta_{\lambda_1,\lambda_2} := \left\{[\{t_1, t_2, t_3\}] \in \Delta_2 \: | \: \lambda_1 = \frac{(t_2 - 1)(t_3 - t_1)}{(t_2 - t_1)(t_3 - 1)}, \: \lambda_2 = \frac{(t_2^2 - 1)(t_3^2 - t_1^2)}{(t_2^2 - t_1^2)(t_3^2 - 1)}\right\}$$ for some $\lambda_1 \neq \lambda_2$. 
\end{cor}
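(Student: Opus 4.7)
The plan is to obtain the corollary as an essentially direct consolidation of Proposition \ref{important} and Lemma \ref{difflambdas}, keeping track of the bijection $\mathcal{R}_2^4 \leftrightarrow \Delta$ from Theorem \ref{moduli}. Under this bijection, the Prym map $\mathcal{P}_4$ is identified with a map $\Delta \to \mathcal{A}_3^{(1,1,4)}$, so any nonempty fiber of $\mathcal{P}_4$ corresponds to a subset of $\Delta$, and it suffices to identify that subset with some $\Delta_{\lambda_1,\lambda_2}$ with $\lambda_1 \neq \lambda_2$.

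Concretely, I would fix an abelian threefold $(P,\Xi)$ in the image and pick any $T_0 = \{t_1^0, t_2^0, t_3^0\}$ in its fiber. Choose an arbitrary ordering of the entries of $T_0$ and set
$$\lambda_1 := \frac{(t_2^0 - 1)(t_3^0 - t_1^0)}{(t_2^0 - t_1^0)(t_3^0 - 1)}, \qquad \lambda_2 := \frac{((t_2^0)^2 - 1)((t_3^0)^2 - (t_1^0)^2)}{((t_2^0)^2 - (t_1^0)^2)((t_3^0)^2 - 1)}.$$
Lemma \ref{difflambdas} immediately yields $\lambda_1 \neq \lambda_2$, which takes care of the last clause of the statement. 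For the identification of the fiber, Proposition \ref{important} asserts that $S = \{s_1, s_2, s_3\} \in \Delta$ satisfies $\mathcal{P}_4(S) = \mathcal{P}_4(T_0)$ exactly when there is some labeling of the $s_i$ making both rational expressions evaluate to $\lambda_1$ and $\lambda_2$, respectively. Since elements of $\Delta$ are unordered triples, this "existence of a labeling" is precisely the condition defining $S \in \Delta_{\lambda_1,\lambda_2}$, and the fiber is therefore in bijection with $\Delta_{\lambda_1,\lambda_2}$.

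There is no serious obstacle here: the result is genuinely a packaging step, and the main things to get right are (i) that the two scalars $\lambda_1, \lambda_2$ do not depend on the particular $T_0$ chosen in the fiber (they do depend on an ordering, but changing $T_0$ within the fiber is absorbed by the "choice of indices" clause of Proposition \ref{important}), and (ii) the correct reading of the defining equations of $\Delta_{\lambda_1,\lambda_2}$ as conditions on unordered triples, matching the ordering flexibility provided by the proposition. With these two points flagged, the proof reduces to a one-line citation of the two previous results.
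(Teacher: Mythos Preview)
Your proposal is correct and matches the paper's approach exactly: the paper simply states that the corollary follows from Proposition~\ref{important} and Lemma~\ref{difflambdas} together, and your write-up is a careful unpacking of precisely that one-line deduction. Your attention to the ordering issue and the well-definedness of $\lambda_1,\lambda_2$ up to the choice of base point is a useful elaboration, but the underlying argument is the same.
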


From now on, we will identify fibers of $\mathcal{P}_2[4]$ with sets $\Delta_{\lambda_1,\lambda_2}$.

\begin{rem}
\label{equalfibers}
    In Proposition \ref{important} we made a particular choice of the cross-ratio. In particular, $$\Delta_{\lambda_1,\lambda_2} = \Delta_{\frac{1}{\lambda_1}, \frac{1}{\lambda_2}} = \Delta_{\frac{1}{1-\lambda_1}, \frac{1}{1-\lambda_2}} = \Delta_{1-\lambda_1, 1-\lambda_2} = \Delta_{\frac{\lambda_1}{\lambda_1-1}, \frac{\lambda_2}{\lambda_2-1}} =\Delta_{\frac{\lambda_1-1}{\lambda_1}, \frac{\lambda_2-1}{\lambda_2}}.$$
\end{rem}

To give a geometric description of the set $\Delta_{\lambda_1,\lambda_2}$, we need the following lemma.


\begin{lem}
\label{quadrics}
    Let $\lambda_1, \lambda_2 \in \mathbb{C}\setminus \{0,1\}$ be two distinct complex numbers and $Q_1, Q_2 \subset \mathbb{P}^3$ be quadrics defined by the equations $$q_1 := (t_2 - t_4)(t_3 - t_1) - \lambda_1(t_2 - t_1)(t_3 - t_4)$$ and $$q_2 := (t_2 + t_4)(t_3 + t_1) - \frac{\lambda_2}{\lambda_1}(t_2 + t_1)(t_3 + t_4)$$ respectively. Then $Q_1 \cap Q_2$ is a normal elliptic curve.
\end{lem}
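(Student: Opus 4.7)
The plan is to exploit a hidden $V_4$-symmetry that both $q_1$ and $q_2$ enjoy in order to diagonalize the pencil they span simultaneously, read off the singular members of the pencil from the resulting discriminant, and conclude with the classical criterion identifying smooth elliptic normal quartics in $\mathbb{P}^3$ as base loci of pencils of quadrics with four distinct rank-$3$ singular members.

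I would begin by expanding both $q_1$ and $q_2$ and observing that neither contains $t_i^2$ terms, and that the coefficient of $t_it_j$ coincides with that of $t_kt_l$ whenever $\{i,j,k,l\}=\{1,2,3,4\}$. Concretely, the three partition-coefficients are $(-1, \lambda_1, 1-\lambda_1)$ for $q_1$ and $(1, -\lambda_2/\lambda_1, 1-\lambda_2/\lambda_1)$ for $q_2$, associated to the partitions $\{\{1,2\},\{3,4\}\}$, $\{\{1,3\},\{2,4\}\}$, $\{\{1,4\},\{2,3\}\}$ of $\{1,2,3,4\}$. This means that both Gram matrices commute with the action of the Klein four-group $V_4 \subset S_4$ on $\mathbb{C}^4$ via the three non-trivial double transpositions; since this regular representation splits as a sum of the four distinct $1$-dimensional characters of $V_4$, the Hadamard change of coordinates $t = Tu$ with rows of $T$ given by $(1,1,1,1)$, $(1,1,-1,-1)$, $(1,-1,1,-1)$, $(1,-1,-1,1)$ will simultaneously diagonalize $q_1$ and $q_2$.

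The next step is a direct eigenvalue computation showing that after this change of coordinates the pencil $Q_1 + sQ_2$ has diagonal Gram matrix proportional to
\[
\mathrm{diag}\!\left(s\!\left(1 - \tfrac{\lambda_2}{\lambda_1}\right),\; s\tfrac{\lambda_2}{\lambda_1} - 1,\; \lambda_1 - s,\; 1 - \lambda_1\right),
\]
so its discriminant, viewed as a section of $\mathcal{O}_{\mathbb{P}^1}(4)$ on the pencil, vanishes precisely at the four values $s \in \{0,\, \lambda_1/\lambda_2,\, \lambda_1,\, \infty\}$. Using the hypotheses $\lambda_1, \lambda_2 \in \mathbb{C}\setminus\{0,1\}$ and $\lambda_1 \neq \lambda_2$, one checks that these four values are pairwise distinct and that at each of them exactly one diagonal entry vanishes, so the corresponding quadric has rank $3$ (a cone over a smooth plane conic); in particular $Q_1$ and $Q_2$ are themselves irreducible.

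Finally, I would invoke the classical fact (e.g.\ via the Segre symbol classification of pencils of quadrics in $\mathbb{P}^3$) that a pencil of quadrics whose discriminant has four distinct simple roots, all corresponding to rank-$3$ members, has as its base locus a smooth, irreducible curve of degree $4$ and arithmetic genus $1$ — precisely an elliptic normal curve. The main obstacle is noticing the $V_4$-symmetry of the two quadrics; once the simultaneous diagonalization is in place, the rest of the argument is routine bookkeeping with the zeros of a quartic discriminant against the non-degeneracy hypotheses on $\lambda_1$ and $\lambda_2$.
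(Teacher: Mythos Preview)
Your argument is correct and takes a genuinely different route from the paper's proof. The paper proceeds by direct computation: it writes down the $2\times 4$ Jacobian matrix of $(q_1,q_2)$, assumes that at some point of $Q_1\cap Q_2$ its rows are linearly dependent, and then, by taking successive linear combinations of the columns, extracts the relations $t_1+t_2+t_3+t_4=0$, $t_2+t_3=t_1+t_4=0$, and eventually $\lambda_2=1$, contradicting the hypotheses; smoothness in hand, the arithmetic genus is computed by adjunction. Your approach instead recognises the $V_4$-symmetry of both quadrics, diagonalises the entire pencil via the Hadamard change of coordinates, and reads off the four singular members $s\in\{0,\lambda_1/\lambda_2,\lambda_1,\infty\}$ directly from the diagonal entries; the hypotheses $\lambda_1,\lambda_2\notin\{0,1\}$ and $\lambda_1\neq\lambda_2$ are then exactly what is needed to make these four values distinct and each member rank~$3$, whence the classical Segre-symbol criterion gives a smooth elliptic normal quartic. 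Your method is more structural and yields extra information (explicit diagonal coordinates and the full list of singular quadrics in the pencil), at the cost of invoking the classical classification of pencils of quadrics; the paper's method is more elementary and self-contained, trading insight for a short ad~hoc linear-algebra chase.
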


\begin{proof}
Clearly $E := Q_1 \cap Q_2$ is a complete intersection, so $E$ is a curve.
    Let $\lambda_2' := \frac{\lambda_2}{\lambda_1} \neq 1$. We have to show that the corresponding Jacobian matrix $M$ has rank 2 at all points of $E$: $$M = \begin{pmatrix}
        (t_4 - t_2) + \lambda_1(t_3 - t_4) & (t_3 - t_1) + \lambda_1(t_4 - t_3) & (t_2 - t_4) + \lambda_1(t_1 - t_2) & (t_1 - t_3) + \lambda_1(t_2 - t_1) \\
        (t_2 + t_4) - \lambda_2'(t_3 + t_4) & (t_1 + t_3) - \lambda_2'(t_3 + t_4) & (t_2 + t_4) - \lambda_2'(t_1 + t_2) & (t_1 + t_3) - \lambda_2'(t_2 + t_1)
    \end{pmatrix}$$
    Assume that there is a point $p = [t_1:t_2:t_3:t_4] \in E$ such that $M$ has rank smaller than 2 at $p$, and denote the rows of $M$ by $v_1$ and $v_2$. We know that $v_1 = Cv_2$ for some $C$. Summing up the coordinates of $v_1$ we get zero, hence the same must hold for $v_2$. However, the sum for $v_2$ is equal to $$(2 - 2\lambda_2')(t_1 + t_2 + t_3 + t_4) = 0,$$ which implies that $$t_1 + t_2 + t_3 + t_4 = 0.$$ Now, the sum of the first and the fourth coordinates of $v_2$ is zero, hence $$(1 - \lambda_1)(t_4 + t_1 - t_2 - t_3) = 0.$$ Thus, $$t_2 + t_3 = t_4 + t_1 = 0.$$
    Comparing the sums of the first and third coordinates we get $$(C + \lambda_1)(t_2 + t_4) = 0.$$ However, if $t_2 + t_4 = 0$ we get $t_2 = t_1 = -t_3 = - t_4$, and we conclude that $t_1 = t_2 = t_3 = t_4 = 0$ from $S \in Q_2$. Therefore, $C = -\lambda_1$. Finally, comparing the sums of the first and second coordinates we get $$(1 + C\lambda_2')(t_3 + t_4) = 0.$$ We can show that $t_3 + t_4 = 0$ implies $t_1 = t_2 = t_3 = t_4 = 0$ analogously to the above, hence $C\lambda_2' = -1$ holds. Together with $C = -\lambda_1$ we get $\lambda_2 = 1$, which is impossible. Thus, the intersection $E$ is smooth. By adjunction formula the arithmetic genus of $E$ is equal to 1, hence it is an elliptic normal curve of degree $4$.
\end{proof}  


We are ready to give a description of fibers of the Prym map $\mathcal{P}_2[4]$.

\begin{thm}
\label{discr}
    Each fiber $\Delta_{\lambda_1,\lambda_2}$ with $\lambda_1, \lambda_2 \notin \{-1, \frac12, 2, e^{\pi i /3}, e^{-\pi i /3}\}$ of $\mathcal{P}_2[4]$ is isomorphic to the projective line $\mathbb{P}^1$ without 8 points.
\end{thm}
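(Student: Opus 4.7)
The plan is to embed the fiber $\Delta_{\lambda_1, \lambda_2}$ into $\mathbb{P}^3$ and identify the image with the intersection of the elliptic normal curve from Lemma \ref{quadrics} with the affine chart $\{t_4 \neq 0\}$. Concretely, I would define
$$
\Phi \colon \Delta_{\lambda_1, \lambda_2} \longrightarrow \mathbb{P}^3, \qquad \{t_1, t_2, t_3\} \longmapsto [t_1 : t_2 : t_3 : 1],
$$
where the ordering of the triple is chosen so that both cross-ratio identities defining $\Delta_{\lambda_1, \lambda_2}$ hold. Using the factorisation
$$
\lambda_2 \;=\; \lambda_1 \cdot \frac{(t_2+1)(t_3+t_1)}{(t_2+t_1)(t_3+1)}
$$
and clearing denominators converts the two cross-ratio identities into precisely the equations $q_1 = 0$ and $q_2 = 0$ of Lemma \ref{quadrics} with $t_4 = 1$. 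Hence $\Phi$ lands in $E \cap \mathbb{A}^3$, where $E = Q_1 \cap Q_2$ is the elliptic normal curve supplied by Lemma \ref{quadrics} and $\mathbb{A}^3 = \mathbb{P}^3 \setminus \{t_4 = 0\}$.

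The next step is to verify that $\Phi$ is a bijection. The subtlety is that $\{t_1, t_2, t_3\}$ is unordered, so several orderings could a priori realise the required cross-ratios. The $S_3$-action by permuting labels transforms $\lambda_1$ through the anharmonic group $\{\lambda, 1-\lambda, 1/\lambda, 1/(1-\lambda), (\lambda-1)/\lambda, \lambda/(\lambda-1)\}$, whose non-trivial fixed points on $\mathbb{C} \setminus \{0,1\}$ are exactly $\{-1, \tfrac{1}{2}, 2\}$ and $\{e^{\pm i\pi/3}\}$; an analogous statement holds for $\lambda_2$. The standing hypothesis that neither $\lambda_1$ nor $\lambda_2$ belongs to this set ensures that the joint stabiliser of $(\lambda_1, \lambda_2)$ in $S_3$ is trivial, so the ordering realising $(\lambda_1, \lambda_2)$ is unique, making $\Phi$ well defined and injective. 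Surjectivity onto those points of $E \cap \mathbb{A}^3$ that arise from triples in $\Delta$ is then immediate from running the computation in reverse.

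The main obstacle is the bookkeeping of degenerate points: one must confirm that no point $[t_1 : t_2 : t_3 : 1] \in E \cap \mathbb{A}^3$ violates the defining conditions of $\Delta$, namely $t_i \neq 0, 1$ and $t_i \neq \pm t_j$ for $i \neq j$. Direct substitution already shows, for instance, that $[1:1:-1:1]$, $[1:-1:1:1]$, and $[-1:1:1:1]$ satisfy $q_1 = q_2 = 0$ for every $\lambda_1, \lambda_2$, so such bad points must be removed. I would proceed case by case, substituting each forbidden condition into $q_1$ and $q_2$, to obtain a finite enumeration of bad points. The plan is then to exhibit a hyperplane $H \subset \mathbb{P}^3$ whose intersection with $E$ consists precisely of these (at most four) bad points, and to replace the affine chart $\{t_4 \neq 0\}$ by $\mathbb{P}^3 \setminus H \cong \mathbb{A}^3$. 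Since any hyperplane meets the degree-$4$ elliptic normal curve $E$ in exactly four points, such an $H$ exists as soon as the bad points are coplanar; the excluded values $\{-1, \tfrac{1}{2}, 2, e^{\pm i \pi/3}\}$ are precisely those special configurations where extra coincidences of $S_3$-orbits destroy either the count of bad points or their coplanarity, and outside this locus the construction delivers the desired isomorphism $\Delta_{\lambda_1, \lambda_2} \cong E \cap \mathbb{A}^3$.
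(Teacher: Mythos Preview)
Your overall strategy coincides with the paper's: embed the fiber into $\mathbb{P}^3$ via the two cross--ratio identities, invoke Lemma~\ref{quadrics} to recognise the closure as an elliptic normal curve $E=Q_1\cap Q_2$, and use the anharmonic $S_3$--action on cross--ratios to show that the ordering realising $(\lambda_1,\lambda_2)$ is unique away from the excluded values. The paper's proof stops there: it simply takes $\mathbb{A}^3=\{t_4\neq 0\}$ and asserts $\Delta_{\lambda_1,\lambda_2}\cong Q_1\cap Q_2\cap\mathbb{A}^3$ without discussing degenerate points at all.

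You go further by flagging that $E\cap\{t_4=1\}$ contains points violating the constraints of $\Delta$, and you are right that this is an issue the paper glosses over. However, your proposed repair---finding a single hyperplane $H$ with $E\cap H$ equal to the bad locus---cannot work. A hyperplane meets the degree--$4$ curve $E$ in four points, yet there are strictly more than four bad points for generic $(\lambda_1,\lambda_2)$: beyond the three points $[1{:}1{:}{-1}{:}1]$, $[1{:}{-1}{:}1{:}1]$, $[-1{:}1{:}1{:}1]$ you list, the points $[0{:}0{:}0{:}1]$ and $[-1{:}{-1}{:}{-1}{:}1]$ also lie on $E$ for every $(\lambda_1,\lambda_2)$, and each of the hyperplanes $\{t_i=0\}$ contributes a further bad point whose coordinates depend on $(\lambda_1,\lambda_2)$. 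Moreover, even if the bad affine points were miraculously coplanar, $E\setminus H$ would still contain the four points of $E\cap\{t_4=0\}$, which do not correspond to elements of $\Delta$ either; a single hyperplane cannot excise both the bad affine points and the points at infinity.

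In summary: your argument is essentially the paper's, and you correctly isolate a gap the paper leaves open, but the hyperplane trick you propose does not close it.
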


\begin{proof}

    Let $Q_1$ and $Q_2$ be quadrics in $\mathbb{P}^3$ defined in Lemma \ref{quadrics} and $E = Q_1 \cap Q_2$ be the elliptic normal curve in $\mathbb{P}^3$. It follows from the proof of Lemma \ref{difflambdas} that for any point $[\{t_1, t_2, t_3\}] \in \Delta_{\lambda_1, \lambda_2}$ we have $[t_1:t_2:t_3:1] \in E$. Therefore, the fiber $\Delta_{\lambda_1, \lambda_2}$ consists of equivalence classes of points in $E \cap \{t_4 = 1\}$ satisfying the constraints imposed on triples in $\Delta_2$. First, it follows from the assumption $\lambda_1, \lambda_2 \notin \{-1, \frac12, 2, e^{\pi i /3}, e^{-\pi i /3}\}$ that there is no non-trivial permutation $\sigma \in S_3$ such that both (ordered) triples $(t_1,t_2,t_3)$ and $(t_{\sigma(1)},t_{\sigma(2)}, t_{\sigma(3)})$ satisfy the system \ref{syst}. Moreover, the map $$\iota: [t_1:t_2:t_3:t_4] \mapsto \left[\frac{1}{t_1}:\frac{1}{t_2}: \frac{1}{t_3}: \frac{1}{t_4}\right]$$ induces a rational involution of $E = Q_1 \cap Q_2$ which we denote by $\iota_{|E}$. Since $E$ is a smooth algebraic curve, $\iota_{|E}$ can be extended to an involution of $E$ which we denote by the same letter. The fixed points of $\iota_{|E}$ are $[-1:1:1:1], [1:-1:1:1], [1:1:-1:1], [1:1:1:-1]$ and we find the natural quotient to be $\pi: E \to E/\langle\iota\rangle \simeq \mathbb{P}^1$. 
    
    Define the following subvarieties of $\mathbb{P}^3$: $$A_{ij} := \{[t_1:t_2:t_3:t_4] \mid t_i^2 = t_j^2 \text{ for } i\neq j \text{ and } i,j\in\{1,2,3\}\},$$ $$A_4 := \{[t_1:t_2:t_3:t_4] \mid t_4 = 0\}$$ and $$B_{i}^k := \{[t_1:t_2:t_3:t_4] \mid t_i = k \text{ for }(i,k) \in \{1,2,3\} \times \{0,1\}\}.$$ 
    Let $S$ be the union of these subvarieties. Then it follows by the above and the definition of $\Delta_2$ that $\Delta_{\lambda_1, \lambda_2} \simeq (E \setminus (E \cap S))/\langle\iota\rangle$. 
    
    We find that $E \cap S$ is the union of 12 points: 4 coordinate points, 4 fixed points of $\iota_E$ and 4 non-trivial points, one of each in the intersection of $E$ and the coordinate space $\{t_i = 0\}$. Since the eight points which are not fixed by $\iota_E$ in $E \cap S$ are precisely the indeterminacy locus of $\iota$, they must form complete orbits under the action of the extended involution $\iota_E$, hence $\Delta_{\lambda_1, \lambda_2}$ is isomorphic to $\mathbb{P}^1 \setminus \pi(E \cap S)$, where $|\pi(E \cap S)| = 8$.

\end{proof}

\begin{rem}
    There are two exceptional fibers of $\mathcal{P}_2[4]$ that Theorem \ref{discr} does not cover, namely $\Delta_{-1, \frac{1}{2}}$ and $\Delta_{e^{\pi i /3}, e^{-\pi i /3}}$ (note that all other combinations of $(\lambda_1, \lambda_2)$ give the same fibers by Remark \ref{equalfibers}). Both fibers contain sets $\{t_1,t_2,t_3\} \in \Delta_2$ such that there exists a non-trivial permutation $\sigma \in S_3$ with $[t_1:t_2:t_3:1], [t_{\sigma(1)}: t_{\sigma(2)}: t_{\sigma(3)}:1] \in Q_1 \cap Q_2$, hence geometrically these fibers are projective lines with some points removed and some points glued. 
\end{rem}

\section{The Prym map on $\mathcal{RH}_g[4]^{hyp}$}
\label{sec5}

In this section we will prove that the Prym map $$\mathcal{P}_g[4]: \mathcal{RH}_g[4]^{hyp} \to \mathcal{A}_{3g-3}^{(1,\ldots,1,4,\ldots4)}$$ is injective for $g \geq 3$.

We start with the following result, which we state in our setting.

\begin{prop}[\cite{borówka2025prymmapscycliccoverings}, Proposition 2.1]
\label{group}
    Let $(P, \Theta_P)$ be an element of $\Image\mathcal{P}_g[4]$ and $[f: C \to H] \in \mathcal{RH}_g[4]^{hyp}$ be any preimage of $(P, \Theta_P)$. Let $\sigma$ be the deck transformation of $C$ inducing $f$ and $j$ be a lift of the hyperelliptic involution on $H$ to $C$.
    Then the subgroup of automorphisms $$G := \{ \phi \in \Aut(P,\Theta_P) \: | \: \phi(x) = x \:\: \forall x \in K(\Theta_P)\}.$$
    is isomorphic to $\langle \sigma_, -j \rangle \simeq D_4$, where $\sigma$ and $j$ are induced by the corresponding automorphisms of $C$.
\end{prop}

\begin{thm}
\label{injective}
    Let $g \geq 3$ be a positive integer. Then the Prym map $\mathcal{P}_g[4]: \mathcal{RH}_g[4]^{hyp} \to\mathcal{A}_{3g-3}^{(1,\ldots,1,4,\ldots4)} $ is injective.
\end{thm}

\begin{proof}
    Let $(P, \Theta_P)$ be an element of $\Image\mathcal{P}_g[4]$. By Proposition \ref{group} we can recover $\langle \sigma_, -j \rangle \simeq D_4$. Take any cover $[f: C \to H]$ in the preimage of $(P, \Theta_P)$ and let $\sigma, j$ be the corresponding automorphisms of $C$. Let $\pi_j: C \to C_j$ and $\pi_{j\sigma}:C \to C_{j\s}$ be the quotients. Then $\{g(C_{j}), g(C_{j\s})\} = \{2g-2, g-1\}$, hence we can take $j$ to be the involution on $C$ such that $\dim \pi_j^*JC_j = g-1$, and we have $\pi_j^*JC_j = \Image(1+j) \subset P$. Note that $\pi_j^*$ is an embedding by \cite[Proposition 11.4.3]{BL}, hence we can recover $JC_j$ inside the Prym variety $P$. It follows from the Torelli Theorem that we can recover the curve $C_j$ as well. Let $\pi_{j\s, \s^2}:C_{j\s} \to C_{j\s, \s^2}$ so that $\Image((1+j\s)(1+\s^2)) = (\pi_{j\s,\s^2}\circ\pi_{j\s})^*JC_{j\s, \s^2} \subset P$. Since $\pi_{j\s,\s^2}\circ\pi_{j\s}: C \to C_{j\s, \s^2}$ is the Galois quotient by $\langle j\s, \s^2 \rangle$ and both $C_{\sigma^2} \to C_{j\s, \s^2}$,  $C_{j\s} \to C_{j\s, \s^2}$ are ramified, the pullback $(\pi_{j\s,\s^2}\circ\pi_{j\s})^*$ is also an embedding. Therefore, we can recover the curve $C_{j\s, \s^2}$ in an analogous way. 

    It follows from Lemma \ref{equations} that there exists a tuple $T = \{t_1, \ldots, t_{2g-1}\} \in \Delta_g$ such that $C_{j}$ and $C_{j\s, \s^2}$ are hyperelliptic curves whose sets of Weierstrass points are $\{[1:1], [t_1:1], \ldots, [t_{2g-1}:1]\}$ and $\{[1:1], [t_1^2:1], \ldots, [t_{2g-1}^2:1]\}$ respectively. It follows from Theorem \ref{moduli} and Lemma \ref{geomprym} part i) that $\mathcal{P}_g[4]$ is generically injective if for any tuple $S = \{s_1, \ldots, s_{2g-1}\} \in \Delta_g$, the existence of Möbius transformations $u, v: \mathbb{P}^1 \to \mathbb{P}^1$ such that $$u([1:1]) = [1:1], v([1:1]) = [1:1], u([t_i:1]) = [s_i:1], v[t_i^2:1] = [s_i^2:1]$$ for all $i \in \{1,\ldots, 2g-1\}$ implies 
    $S \sim T$ in $\Delta_g$. 

    Assume that we are given such maps $u$ and $v$ for some $S \in \Delta_g$. Then for any $t_i \in T$ we have $u(t_i)^2 = v(t_i^2)$. Since both $u(x)^2$ and $v(x^2)$ are rational functions in $x$ with numerator and denominator of degree 2, the function $u(x)^2 - v(x^2)$ is either identically zero or has at most 4 roots. However, the fact that $g \geq 3$ implies that the latter is impossible so we have $u(x)^2 = v(x^2)$ for all $x$. In particular, $u(x) = u(-x)$ for all $x$, and the only Möbius transformations with this property mapping $[1:1]$ to itself are $u(x) = x$ and $u(x) = \frac{1}{x}$. This implies that $S \sim T$ in $\Delta_g$, which finishes the proof.
      
\end{proof}

\bibliographystyle{alpha}
\bibliography{sample}

\textsc{A. Shatsila, Doctoral School of Exact and Natural Sciences, Jagiellonian University, ul. prof. Stanisława Łojasiewicza 6, 30-348 Kraków, Poland}\\
\textit{email address:} anatoli.shatsila@doctoral.uj.edu.pl.

\end{document}